\DeclareMathOperator{\coarea}{coarea}
\DeclareMathOperator{\Gal}{Gal}
\DeclareMathOperator{\M}{M}
\DeclareMathOperator{\PGL}{PGL}
\DeclareMathOperator{\PSL}{PSL}
\DeclareMathOperator{\Ram}{Ram}
\DeclareMathOperator{\SL}{SL}
\DeclareMathOperator{\tr}{tr}
\newcommand{\Q}{\mathbb Q}
\newcommand{\R}{\mathbb R}
\newcommand{\Z}{\mathbb Z}
\newcommand{\bfH}{\mathbf H}
\newcommand{\frakp}{\mathfrak{p}}
\newcommand{\frakq}{\mathfrak{q}}
\newcommand{\calG}{\mathcal{G}}
\newcommand{\calK}{\mathcal{K}}
\newcommand{\calO}{\mathcal{O}}
\numberwithin{equation}{section}
\theoremstyle{remark}
\newtheorem{example}[equation]{Example}
\newtheorem{rmk}[equation]{Remark}
\theoremstyle{plain}
\newtheorem{thm}[equation]{Theorem}
\newtheorem{theorem}[equation]{Theorem}
\newtheorem{lemma}[equation]{Lemma}
\newtheorem{cor}[equation]{Corollary}
\title{Counting problems for geodesics on arithmetic hyperbolic surfaces}
\author{Benjamin Linowitz}
\address{Department of Mathematics\\Oberlin College\\Oberlin, OH 44074}
\email{benjamin.linowitz@oberlin.edu}
\begin{document}

\begin{abstract} 
It is a longstanding problem to determine the precise relationship between the geodesic length spectrum of a hyperbolic manifold and its commensurability class. A well known result of Reid, for instance, shows that the geodesic length spectrum of an arithmetic hyperbolic surface determines the surface's commensurability class. It is known, however, that non-commensurable arithmetic hyperbolic surfaces may share arbitrarily large portions of their length spectra. In this paper we investigate this phenomenon and prove a number of quantitative results about the maximum cardinality of a family of pairwise non-commensurable arithmetic hyperbolic surfaces whose length spectra all contain a fixed (finite) set of nonnegative real numbers.
\end{abstract}

\maketitle

\section{Introduction}

Let $M$ be an orientable hyperbolic manifold (or orbifold) with finite volume. The {\it length spectrum} of $M$ is defined to the set of all lengths of closed geodesics in $M$. Further, two manifolds are said to be {\it commensurable} if they share an isometric finite-sheeted covering. Commensurability is an equivalence relation, and the {\it commensurability class} of $M$ is the equivalence class containing $M$.

One of the earliest results concerning the relationship between the length spectrum of a hyperbolic manifold and its commensurability class is due to Reid \cite{R} and shows that if two arithmetic hyperbolic $2$-manifolds have the same length spectra then they are necessarily commensurable. This was later extended to arithmetic hyperbolic $3$-manifolds by Chinburg-Hamilton-Long-Reid \cite{CHLR}. It turns out that one does not need the entire length spectrum in order to force commensurability in these cases. In \cite{LMPT}, Linowitz, McReynolds, Pollack and Thompson showed that two arithmetic hyperbolic $3$-manifolds of volume at most $V$ whose length spectra coincide for all geodesic lengths less than $c\cdot \left(\exp(\log V^{\log V})\right)$ are commensurable, where $c>0$ is an absolute constant. A similar result was proven for arithmetic hyperbolic surfaces.

Although a number of authors have addressed the relationship between the length spectrum of a hyperbolic manifold and its commensurability class in the arithmetic setting, to our knowledge the only papers that consider the non-arithmetic setting are those of Millichap \cite{Mi} and Futer and Millichap \cite{FM}, where families of non-commensurable $3$-manifolds having the same volume and the same $n$ shortest geodesic lengths were constructed.
 
The past ten years have seen a number of papers considering this problem for more general locally symmetric spaces. Lubotzky, Samuels and Vishne \cite{LSV}, for instance, have constructed non-commensurable arithmetic manifolds with universal cover the symmetric space associated to $\PGL_n(\mathbb R)$ (for $n\geq 3$) having the same length spectra. More generally, Prasad and Rapinchuk \cite{PR} have considered locally symmetric spaces $\mathfrak X_\Gamma=\calK\backslash \calG / \Gamma$ where $\calG=G(\mathbb R)$ is the Lie group associated to a connected semi-simple real algebraic subgroup $G$ of $\SL_n$, $\calK$ is a maximal compact subgroup of $\calG$ and $\Gamma$ is a discrete torsion-free subgroup of $\calG$. In particular they showed that there exist pairs of non-commensurable locally symmetric spaces $\mathfrak X_{\Gamma_1}$ and $\mathfrak X_{\Gamma_2}$ with the same length spectra only if $G$ is of type $A_n (n>1)$, $D_{2n+1} (n\geq 1)$, $D_4$ or $E_6$.

In this paper we focus on hyperbolic surfaces and prove a variety of results which quantify the extent to which two non-commensurable hyperbolic surfaces may contain many geodesic lengths in common. Because we will be considering arithmetic hyperbolic surfaces, we briefly recall what it means for a hyperbolic surface to be arithmetic. Given a discrete subgroup $\Gamma$ of $\PSL_2(\mathbb R)$, the {\it commensurator} of $\Gamma$ is the set \[\mathrm{Comm}(\Gamma)=\{g\in \PSL_2(\mathbb R) : \Gamma\text { and }g\Gamma g^{-1}\text{ are commensurable }\}.\] The celebrated Margulis dichotomy \cite{Ma} states that $\Gamma$ is arithmetic if and only if $\Gamma$ has infinite index in $\mathrm{Comm}(\Gamma)$. An alternative characterization of arithmeticity defines a hyperbolic surface to be arithmetic if and only if it is commensurable with a hyperbolic surface of the form ${\bf H}^2/\Gamma_\mathcal O$. Here ${\bf H}^2$ denotes the hyperbolic plane and $\Gamma_\mathcal O$ is a group constructed from a maximal order in a quaternion algebra defined over a totally real field (we will review the construction of $\Gamma_\mathcal O$ in Section \ref{section:arithmetic}). We note that an arithmetic hyperbolic surface is called {\it derived from a quaternion algebra} if its fundamental group is contained in a group of the form $\Gamma_\mathcal O$.

We now define a counting function whose behavior will be studied throughout this paper. Given a set $S=\{\ell_1,\dots,\ell_r\}$ of nonnegative real numbers we define $\pi(V, S)$ to be the maximum cardinality of a collection of pairwise non-commensurable arithmetic hyperbolic surfaces derived from quaternion algebras, each of which has volume less than $V$ and length spectrum containing $S$.

The function $\pi(V,S)$ was previously studied in \cite[Theorem 4.10]{LMPT}, where it was shown that if $\pi(V,S)\to\infty$ as $V\to\infty$ then there exist integers $1\leq a,b\leq |S|$ and constants $c_1,c_2>0$ such that \[c_1\frac{V}{\log V^{1-\frac{1}{2^{a}}}} \leq \pi(V,S) \leq c_2\frac{V}{\log V^{1-\frac{1}{2^{b}}}}\] for all sufficiently large $V$.

The first result of this paper considers the asymptotic behavior of $\pi(V,S)$ in short intervals and provides a lower bound on the number of arithmetic hyperbolic surfaces which are pairwise non-commensurable, have length spectra containing $S$ and volume contained in an interval of the form $[V,V+W]$.

\begin{theorem}\label{theorem:shortintervals}
Fix a finite set $S$ of nonnegative real numbers for which $\pi(V,S)\to\infty$ as $V\to\infty$. Let $r$ be the cardinality of $S$ and define $\theta=\frac{8}{3}$ if $r=1$ and $\theta=\frac{1}{2^r}$ otherwise. If $\epsilon>0$ and $V^{1-\theta+\epsilon} < W < V$ then as $V\to\infty$ we have \[\pi(V+W,S)-\pi(V,S) \geq \frac{1}{2^r}\cdot \frac{W}{\log V}.\]
\end{theorem}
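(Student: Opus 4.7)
The strategy is to produce many pairwise non-commensurable surfaces in the target volume window $[V,V+W]$ by varying a single prime in the ramification data of a fixed base quaternion algebra, then counting such primes via a short-interval prime-counting theorem with splitting conditions. Because $\pi(V,S)\to\infty$, I may fix a totally real field $k_0$ and a quaternion algebra $B_0$ over $k_0$ (ramified at all but one archimedean place) whose surface $\mathbb{H}^2/\Gamma_{\mathcal O_0}$ contains $S$ in its length spectrum. For each $\ell_i\in S$ let $L_i/k_0$ be the associated quadratic trace field, generated by a square root of $t_i^2-4$ with $t_i=2\cosh(\ell_i/2)$; by the local-global principle for quaternion algebras, $B_0$ admits an embedding of $L_i$ precisely when every prime in the finite ramification set $T_0:=\Ram_f(B_0)$ is non-split in $L_i$.

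Next, pick $\mathfrak q_0\in T_0$, and for each prime $\mathfrak p$ of $k_0$ outside $T_0$ that is non-split in every $L_i$, define $B_{\mathfrak p}$ to be the quaternion algebra over $k_0$ with the same archimedean ramification as $B_0$ and finite ramification $(T_0\setminus\{\mathfrak q_0\})\cup\{\mathfrak p\}$. Even cardinality of the ramification is preserved, and the algebra still admits each $L_i$, so the associated surface $\mathbb H^2/\Gamma_{\mathcal O_{\mathfrak p}}$ has length spectrum containing $S$. Distinct $\mathfrak p$ give pairwise non-isomorphic algebras and hence pairwise non-commensurable surfaces. Borel's volume formula gives
\[
\vol\bigl(\mathbb H^2/\Gamma_{\mathcal O_{\mathfrak p}}\bigr) \;=\; V_0\cdot \frac{N\mathfrak p-1}{N\mathfrak q_0-1},
\]
so $\vol\in[V,V+W]$ iff $N\mathfrak p$ lies in an interval $I$ of length $(N\mathfrak q_0-1)W/V_0$ centered near $(N\mathfrak q_0-1)V/V_0$. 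By choosing the base data so that $(N\mathfrak q_0-1)/V_0\geq 1$, the interval $I$ has length at least $W$ and sits at scale comparable to $V$.

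It then remains to lower bound the number of primes $\mathfrak p$ of $k_0$ with $N\mathfrak p\in I$ that are non-split in each $L_i$. Let $M$ be the compositum $k_0(\sqrt{t_1^2-4},\ldots,\sqrt{t_r^2-4})$, a Galois extension of $k_0$ of degree at most $2^r$. The non-split condition picks out a union of $\Gal(M/k_0)$-conjugacy classes of density $1/2^r$, so Chebotarev combined with the appropriate short-interval theorem yields at least $(1/2^r)\cdot W/\log V$ such primes. For $r=1$ the condition reduces to a single Hecke-character constraint and the strongest short-interval prime counts (of Huxley / Baker--Harman--Pintz type) apply, giving the stated exponent $\theta=8/3$, i.e.\ essentially no constraint on $W$. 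For $r\geq 2$ one must invoke an effective Chebotarev-in-short-intervals result for $M/k_0$, whose admissible range of interval lengths is the stated $W>V^{1-1/2^r+\epsilon}$. The main obstacle I expect is the analytic input in the $r\geq 2$ regime: establishing the Chebotarev density in short intervals uniformly as the field degree $2^r$ grows, together with the bookkeeping needed to translate the prime count into the claimed constant $1/2^r$ in the final lower bound.
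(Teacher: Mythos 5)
Your high-level strategy---fix a base quaternion algebra over the forced invariant trace field, vary the finite ramification to generate many pairwise non-isomorphic algebras admitting all of the $L_i$, control volume via Borel's formula, and count eligible primes with a Chebotarev theorem in short intervals---is indeed the paper's strategy. But there is a genuine gap at the crucial step. You write that because $B_{\mathfrak p}$ ``still admits each $L_i$, so the associated surface $\mathbb H^2/\Gamma_{\mathcal O_{\mathfrak p}}$ has length spectrum containing $S$.'' This implication is false in general. A geodesic of length $\ell_i$ on $\mathbf H^2/\Gamma_{\mathcal O}$ requires an embedding of the specific quadratic $\mathcal O_k$-order $\Omega_i$ (generated by a lift of $\gamma_i$) into the maximal order $\mathcal O$, not merely an embedding of the field $L_i$ into the algebra. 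The Chinburg--Friedman selectivity phenomenon (Theorem~\ref{thm:CF}) shows that even when $L_i$ embeds into $B$, it can happen that exactly half of the conjugacy classes of maximal orders contain no conjugate of $\Omega_i$. The paper confronts this directly via Theorem~\ref{thm:selectivity} and, more concretely, by arranging $\Ram_f(B)\neq\emptyset$, which defeats condition (2) of Theorem~\ref{thm:CF} and guarantees that \emph{every} maximal order works. Your construction does put $\mathfrak p\in\Ram_f(B_{\mathfrak p})$, so the conclusion happens to hold, but you never invoke the selectivity theorem and so the step is unjustified.

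There are also secondary issues. To know that the Frobenius class you are counting has density exactly $1/2^r$ (and in particular is nonempty), one needs $[L:k]=2^r$ for the compositum $L$ of the $L_i$; the paper deduces this from $\pi(V,S)\to\infty$ by a Galois-theoretic argument, and you omit it. Your ``swap a ramified prime'' construction requires $\Ram_f(B_0)\neq\emptyset$ and requires you to arrange the scaling constant $(N\mathfrak q_0-1)/V_0\geq 1$; neither is verified. The paper instead adjoins \emph{two} new primes $\{\mathfrak p_0,\mathfrak p\}$ to $\Ram(B_0)$ and chooses $N(\mathfrak p_0)>13$, which together with the Kellerhals minimal-area bound forces $V_0(N(\mathfrak p_0)-1)>1$, giving clean control over the rescaled interval. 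Finally, the analytic input is the single Balog--Ono short-interval Chebotarev theorem applied uniformly for all $r$ (with the value of $\theta$ coming from that paper), not a patchwork of Huxley-type prime counting for $r=1$ and a separate Chebotarev result for $r\geq 2$ as you suggest.
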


The assumption that $\pi(V,S)\to\infty$ as $V\to\infty$ is necessary in Theorem \ref{theorem:shortintervals} because of the existence of sets $S$ for which the function $\pi(V,S)$ is non-zero yet constant for all sufficiently large $V$. The remainder of this paper is devoted to a careful analysis of the situation in which $\pi(V,S)$ is eventually constant.

In Lemma \ref{lemma:sameinvariants} we will show that if $S$ is such that $\pi(V,S)>0$ for sufficiently large $V$ then every arithmetic hyperbolic surface with length spectrum containing $S$ must have the same invariant trace field (see Section \ref{section:arithmetic} for a definition). In the following theorem we will denote this common invariant trace field by $k$.

\begin{theorem}\label{theorem:finiteness}
Suppose that for some fixed (finite) set $S$ of nonnegative real numbers the function $\pi(V,S)$ is eventually constant and greater than zero. There exist integers $\ell,m,n$ with $\ell\in \{0,1\}$, $m\in\{1,[k:\mathbb Q]\}$ and $n\geq0$ such that \[\lim_{V\to\infty} \pi(V,S)=m 2^{n}-\ell.\] Furthermore, $\ell=0$ whenever $k$ has narrow class number one.
\end{theorem}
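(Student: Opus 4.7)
The plan is to parameterize the commensurability classes of derived arithmetic hyperbolic surfaces whose length spectra contain $S$ by quaternion algebras over the common invariant trace field $k$ (provided by Lemma \ref{lemma:sameinvariants}) and count them. For each $\ell_i\in S$, set $t_i=2\cosh(\ell_i/2)$ and $L_i=k(\sqrt{t_i^2-4})$. Any hyperbolic element of $\Gamma_\mathcal O$ with translation length $\ell_i$ has reduced trace $\pm t_i$ in $B$ and hence generates a copy of $L_i$ over $k$ inside $B$, so the presence of $\ell_i$ in the length spectrum of a surface in the commensurability class of $B$ forces $L_i\hookrightarrow B$. By the Albert--Brauer--Hasse--Noether theorem this is equivalent to every place of $k$ ramified in $B$ being non-split in $L_i/k$. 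Let $\Omega_\infty$ be the set of real places of $k$ that are complex in every $L_i$, and let $\Omega_f$ be the analogous set of finite places.

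I would then extract the combinatorics forced by the hypothesis that $\pi(V,S)$ is eventually constant and positive. Finiteness of $\pi(V,S)$ forces $\Omega_f$ to be finite, since distinct admissible finite ramification sets produce pairwise non-commensurable surfaces of arbitrarily large covolume. Positivity forces $|V_\infty\setminus\Omega_\infty|\leq 1$, because the split real place of any admissible $B$ must coincide with the unique exceptional real place (if one exists); this yields the two cases $m=1$ and $m=[k:\Q]$ for the choice of split real place. For each such choice, the admissible finite ramification sets are precisely the subsets $T\subseteq\Omega_f$ with $|T|+[k:\Q]-1$ even, and a parity count gives exactly $2^n$ such $T$ for some $n\geq 0$. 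This produces an upper bound of $m\cdot 2^n$ quaternion algebras $B/k$ satisfying the necessary local embedding conditions.

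The main obstacle, and the content of the $-\ell$ correction, is to determine which of these quaternion algebras are actually realized by a derived arithmetic surface whose length spectrum contains $S$. Realization requires not only $L_i\hookrightarrow B$ for every $i$ but the stronger property that a single maximal order $\mathcal O\subset B$ simultaneously admits optimal embeddings of the orders $\mathcal O_{L_i}$. By Eichler's theorem on optimal embeddings together with the Chinburg--Friedman analysis of exceptional conjugacy classes of maximal orders, the set of maximal orders of $B$ failing to embed a given $\mathcal O_{L_i}$ is controlled by a subgroup of the type group of $B$, which is itself a quotient of the narrow class group of $k$. When $k$ has narrow class number one, the type groups are trivial, every $B$ in our count is realized, and so $\ell=0$. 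In general one must show that at most one of the $m\cdot 2^n$ algebras $B$ can fail to admit a simultaneous optimal embedding; this is the delicate step, since a naive union bound over the $L_i$ would only give $\ell\leq r$. The crucial input is the $2$-torsion structure of the type groups, which forces the exceptional algebras to lie in at most a single coset as the finite ramification set of $B$ varies over subsets of $\Omega_f$.
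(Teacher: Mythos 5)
Your reduction is the same as the paper's: use Lemma \ref{lemma:sameinvariants} to fix the invariant trace field $k$ and the quadratic extensions $L_i$, count the quaternion algebras over $k$ split at exactly one real place and admitting all the $L_i$ via the parity argument on the finite set of everywhere-non-split places (this is precisely Corollary \ref{cor:csacor}, giving $m2^n$ with $m\in\{1,[k:\Q]\}$), and then subtract at most one algebra for which no maximal order realizes all the required embeddings. Two small imprecisions first: what must embed in a maximal order is not $\mathcal O_{L_i}$ but the order $\Omega_i=\mathcal O_k[\lambda_{\gamma_i}]$ generated by a preimage of $\gamma_i$ (this matters because condition (3) of Theorem \ref{thm:CF} involves the conductor of $\Omega_i$); and the finiteness of the set of everywhere-non-split finite places follows from the hypothesis that $\pi(V,S)$ is eventually \emph{constant} (bounded), not merely finite for each $V$.

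The genuine gap is in the step you yourself flag as delicate: why at most \emph{one} of the $m2^n$ algebras can be exceptional. Your proposed mechanism --- the $2$-torsion structure of the type groups forcing the exceptional algebras into ``a single coset'' as $\Ram_f(B)$ varies --- is not an argument. The type group is attached to a fixed algebra $B$ and governs which conjugacy classes of maximal orders of that $B$ embed a given $\Omega_i$; it carries no information about which algebras $B$ are selective, and there is no group in which the exceptional algebras form a coset as $B$ varies (nor would ``lying in a coset'' bound their number by one). The input that actually works is condition (2) of Theorem \ref{thm:CF}: selectivity for a pair $(B',L_i)$ forces $\Ram_f(B')=\emptyset$ and $\Ram_\infty(B')=\Ram_\infty(L_i/k)$. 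Since by hypothesis some admissible algebra is split at a single real place $\nu$ and admits all the $L_i$, every real place $\omega\neq\nu$ must ramify in every $L_i$; hence any selective $B'$ in the count satisfies $\Ram(B')=\{\omega:\omega\text{ a real place of }k,\ \omega\neq\nu\}$, which pins down a single candidate algebra independent of $i$. That is how the paper's Theorem \ref{thm:selectivity} gets $\ell\leq 1$ rather than the union bound $\ell\leq r$; without it your argument proves only $\lim\pi(V,S)\geq m2^n-r$. (Your claim that narrow class number one kills the exception is correct, but the paper derives it from \cite{L-S} rather than from triviality of type groups alone, since selectivity for non-maximal $\Omega_i$ is not governed by the type group by itself.)
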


The case in which $k=\mathbb Q$ is especially nice, as this field has narrow class number one and of course satisfies $[k:\mathbb Q]=1$. Theorem \ref{theorem:finiteness} therefore immediately implies:

\begin{cor}\label{cor:2n}
Suppose that for some fixed (finite) set $S$ of nonnegative real numbers the function $\pi(V,S)$ is eventually constant. If $\mathbb Q$ is the invariant trace field associated to $S$ then there is an integer $n\geq 0$ such that \[\lim_{V\to\infty} \pi(V,S)=2^{n}.\]
\end{cor}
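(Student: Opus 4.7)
The plan is to deduce the corollary directly from Theorem \ref{theorem:finiteness} by specializing its parameters to the case $k=\mathbb{Q}$. Theorem \ref{theorem:finiteness} gives the limit in the form $m\cdot 2^{n}-\ell$ with $\ell\in\{0,1\}$, $m\in\{1,[k:\mathbb{Q}]\}$, $n\geq 0$, and furthermore $\ell=0$ whenever the invariant trace field $k$ has narrow class number one. So the work is simply to observe that both of the extra parameters $m$ and $\ell$ are forced to their trivial values once $k=\mathbb{Q}$.

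First, I would note that $[\mathbb{Q}:\mathbb{Q}]=1$, so the set $\{1,[k:\mathbb{Q}]\}$ collapses to $\{1\}$ and necessarily $m=1$. Second, I would note that $\mathbb{Q}$ has narrow class number one: its ordinary class number is one and, as $\mathbb{Q}$ has a single archimedean place with a totally positive unit ($1$ itself) whose inverse is also totally positive, the narrow class group coincides with the ordinary class group and is trivial. The final clause of Theorem \ref{theorem:finiteness} then forces $\ell=0$.

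Substituting $m=1$ and $\ell=0$ into $m\cdot 2^{n}-\ell$ yields $\lim_{V\to\infty}\pi(V,S)=2^{n}$, as claimed. The one bookkeeping point worth verifying — rather than a genuine obstacle — is that the hypotheses of Theorem \ref{theorem:finiteness} are actually available: that theorem requires $\pi(V,S)$ to be eventually constant and \emph{positive}, whereas the corollary only writes ``eventually constant''. But the statement that $\mathbb{Q}$ is ``the invariant trace field associated to $S$'' implicitly assumes the existence of at least one arithmetic hyperbolic surface with length spectrum containing $S$ (otherwise, by Lemma \ref{lemma:sameinvariants}, there is no such field to speak of), which gives $\pi(V,S)\geq 1$ for all sufficiently large $V$ and allows the theorem to be applied.
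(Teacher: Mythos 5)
Your proposal is correct and matches the paper's own (very brief) derivation: the paper likewise observes that $\mathbb{Q}$ has narrow class number one and satisfies $[k:\mathbb{Q}]=1$, so Theorem \ref{theorem:finiteness} immediately gives $m=1$, $\ell=0$, and hence the limit $2^{n}$. Your extra remark about why the positivity hypothesis of Theorem \ref{theorem:finiteness} is implicitly available is a reasonable and harmless clarification.
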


As a complement to Corollary \ref{cor:2n} we prove the following theorem which shows that for every integer $n\geq 0$ one can find a set $S$ such that $\lim_{n\to\infty} \pi(V,S)=2^n$.

\begin{theorem}\label{theorem:existence}
For every integer $n\geq 0$ there exists a set $S$ of nonnegative real numbers such that \[\lim_{V\to\infty} \pi(V,S)=2^{n}.\]
\end{theorem}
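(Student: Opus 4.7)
The plan is to prove Theorem \ref{theorem:existence} by exhibiting, for each $n\geq 0$, an explicit finite set $S_n$ of nonnegative real numbers with $\lim_{V\to\infty}\pi(V,S_n)=2^n$. Since Corollary \ref{cor:2n} guarantees that any finite limit of $\pi(V,S)$ with invariant trace field $\Q$ is already a power of two, it suffices to construct $S_n$ whose invariant trace field is $\Q$ and for which $\pi(V,S_n)$ is eventually constant and equal to $2^n$.

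Concretely, we take $S_n$ of the form $\{\ell_1,\ldots,\ell_r\}$ with $\ell_i=2\cosh^{-1}(t_i/2)$ for positive integers $t_i\geq 3$; since the reduced trace of any element of a maximal order of an indefinite quaternion algebra over $\Q$ lies in $\Z$, requiring $t_i\in\Z$ both ensures every length is realizable and pins the invariant trace field to $\Q$. Each $t_i$ determines a real quadratic field $F_i=\Q(\sqrt{t_i^2-4})$ and a quadratic order $\Z[\gamma_i]$ where $\gamma_i^2-t_i\gamma_i+1=0$. The structural description extracted from the proof of Theorem \ref{theorem:finiteness} identifies the commensurability classes contributing to the limit of $\pi(V,S_n)$ with the indefinite quaternion algebras $B/\Q$ whose ramification sets satisfy the joint local embedding conditions coming from the orders $\Z[\gamma_i]$. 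These conditions restrict $\Ram(B)$ to be an even-cardinality subset of a finite ``admissible'' set $T(S_n)$ of rational primes; as there are $2^{|T(S_n)|-1}$ such subsets when $|T(S_n)|\geq 1$, we need $|T(S_n)|=n+1$.

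The construction of $S_n$ proceeds by fixing $n+1$ rational primes $p_0,\ldots,p_n$ intended as $T(S_n)$, and then choosing $t_1,\ldots,t_r$ so that the resulting local embedding conditions (a) allow every $p_j$ to lie in $\Ram(B)$, meaning each $p_j$ is non-split in every $F_i$ and is unobstructed by the associated conductor constraints, and (b) forbid every prime $q\notin\{p_0,\ldots,p_n\}$ from lying in $\Ram(B)$. The main obstacle is part (b): since each individual $F_i$ has density $1/2$ of split primes, splitting conditions on the $F_i$ alone cannot exclude all but finitely many primes. The resolution uses the additional conductor obstructions attached to the orders $\Z[\gamma_i]$, together with the Chinese Remainder Theorem and quadratic reciprocity, to simultaneously prescribe the splitting behavior of $F_1,\ldots,F_r$ and the factorizations of $t_i^2-4$ at the finitely many ``troublesome'' primes that would otherwise slip through the splitting net. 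Once the $t_i$ are so arranged, the value $\lim_{V\to\infty}\pi(V,S_n)=2^n$ follows from a direct application of Theorem \ref{theorem:finiteness} to $S_n$.
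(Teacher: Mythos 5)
Your overall strategy — reduce the computation of $\lim_{V\to\infty}\pi(V,S_n)$ to counting indefinite quaternion algebras over $\Q$ that admit embeddings of certain real quadratic fields determined by $S_n$, and then arrange for the ``non-split'' primes to form a set of size $n+1$ — is the same as the paper's. However, the step you flag yourself as the main obstacle is precisely where the proposal breaks down, and the fix you offer does not work.

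You correctly observe that for a single (or several ``independent'') $F_i$, the primes that are non-split in all of them have positive density, so there will be infinitely many quaternion algebras admitting all the $F_i$. Your proposed resolution is to invoke ``conductor obstructions attached to the orders $\Z[\gamma_i]$'' to exclude the troublesome primes. This is a dead end: over $k=\Q$ the narrow class number is one, and Theorem~\ref{thm:selectivity} (and the Chinburg--Friedman theory it rests on) shows that \emph{every} maximal order of \emph{every} indefinite quaternion algebra over $\Q$ into which the fields $F_i$ embed contains conjugates of all the orders $\Omega_i$, regardless of their conductors. In other words, over $\Q$ the conductor data impose no constraints at all beyond the field-level condition $F_i\hookrightarrow B$, so they cannot be used to exclude any prime from $\Ram(B)$. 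The correct resolution, and the heart of the paper's Theorem~\ref{theorem:csatheorem2}, is to choose the quadratic fields so that they are \emph{dependent} in a way that forces every prime (outside a finite set) to split in at least one of them: concretely, one arranges that the product of the discriminants is (essentially) a perfect square, so that the identity $\left(\frac{d_1}{p}\right)\left(\frac{d_2}{p}\right)\left(\frac{d_3}{p}\right)=1$ forces at least one Kronecker symbol to be $+1$. A CRT/reciprocity calculation at ``finitely many troublesome primes,'' as you suggest, cannot replace this global dependency; without it the non-split set has positive density.

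Two smaller points. First, your reduction via Corollary~\ref{cor:2n} is not needed: once $T(S_n)$ has been pinned down to $n+1$ primes, counting even-cardinality subsets of $\Ram(B)\subseteq T(S_n)$ (including the empty set, which gives $\M_2(\Q)$ and the modular surface) already yields exactly $2^n$, which is what Theorem~\ref{theorem:finiteness} and the paper's proof deliver directly. Second, your framework starts from integer traces $t_i$ and the fields $F_i=\Q(\sqrt{t_i^2-4})$; you would still need to verify that the quadratic fields produced by the discriminant-dependency construction actually arise this way (i.e., that they contain units of norm one with appropriate traces). The paper handles the analogous issue by citing the realization results in \cite[Chapter~12.2]{MR}, a step your proposal omits.
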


Our proofs are for the most part number theoretic and make extensive use of the correspondence between lengths of closed geodesics on arithmetic hyperbolic surfaces and algebraic integers in quadratic subfields of certain quaternion algebras. Of particular importance are Borel's formula for the area of an arithmetic hyperbolic surface \cite{Borel}, a ``selectivity'' theorem for embeddings of commutative orders into quaternion orders due to Chinburg and Friedman \cite{CF-S}, as well as a version of the Chebotarev density theorem in short intervals due to Balog and Ono \cite{BO}. 

%%%%%%%%%%%%%%%%%%%%%%
%%%%%%%%%%%%%%%%%%%%%%
%%%%%%%%%%%%%%%%%%%%%%
%%%%%%%%%%%%%%%%%%%%%%
%%%%%%%%%%%%%%%%%%%%%%
%%%%%%%%%%%%%%%%%%%%%%
%%%%%%%%%%%%%%%%%%%%%%
%%%%%%%%%%%%%%%%%%%%%%
%%%%%%%%%%%%%%%%%%%%%%
%%%%%%%%%%%%%%%%%%%%%%
%%%%%%%%%%%%%%%%%%%%%%
%%%%%%%%%%%%%%%%%%%%%%

\section{Quaternion algebras and quaternion orders}

Let $k$ be a number field with ring of integers $\mathcal O_k$. A quaternion algebra over $k$ is a central simple $k$-algebra of dimension $4$. Equivalently, a quaternion algebra over $k$ is a $4$-dimensional $k$-vector space with basis $\{1,i,j,ij\}$ such that $i^2,j^2\in k^*$, $ij=-ji$ and such that every element of $k$ commutes with $i$ and $j$. Suppose that $B$ is a quaternion algebra over $k$. Given a prime $\frakp$ of $k$, we define the completion $B_\frakp$ of $B$ at $\frakp$ as $B_\frakp=B\otimes_k k_\frakp$. The classification of quaternion algebras over local fields shows that if $B_\frakp$ is not a division algebra then $B_\frakp\cong \M_2(k_\frakp)$. If $B_\frakp$ is a division algebra we say that $\frakp$ {\it ramifies} in $B$. Otherwise we say that $\frakp$ is {\it unramified} or {\it split} in $B$. The set of primes of $k$ (finite or infinite) which ramify in $B$ is denoted $\Ram(B)$. We denote by $\Ram_f(B)$ (respectively $\Ram_\infty(B)$) the set of finite (respectively infinite) primes of $k$ which ramify in $B$. The set $\Ram(B)$ is known to be finite of even cardinality. Conversely, given any finite set $T$ of primes of $k$ (which are either finite or else real) having even cardinality there exists a unique (up to isomorphism) quaternion algebra $B$ over $k$ for which $\Ram(B)=T$. Note that $B$ is a division algebra if and only if $\Ram(B)\neq\emptyset$.

Given a quaternion algebra $B$ over a number field $k$, we define a {\it quaternion order} to be a subring of $B$ which is also finitely generated as an $\mathcal O_k$-module and contains a basis for $B$ over $k$. A quaternion order is called a maximal order if it is not properly contained in any other quaternion order.

%%%%%%%%%%%%%%%%%%%%%%
%%%%%%%%%%%%%%%%%%%%%%
%%%%%%%%%%%%%%%%%%%%%%
%%%%%%%%%%%%%%%%%%%%%%
%%%%%%%%%%%%%%%%%%%%%%
%%%%%%%%%%%%%%%%%%%%%%
%%%%%%%%%%%%%%%%%%%%%%
%%%%%%%%%%%%%%%%%%%%%%
%%%%%%%%%%%%%%%%%%%%%%
%%%%%%%%%%%%%%%%%%%%%%
%%%%%%%%%%%%%%%%%%%%%%
%%%%%%%%%%%%%%%%%%%%%%

\section{Arithmetic hyperbolic surfaces and their closed geodesics}\label{section:arithmetic}

Let $k$ be a totally real field of degree $n_k$ with absolute value of discriminant $d_k$ and Dedekind zeta function $\zeta_k(s)$. Let $B$ be a quaternion algebra over $k$ which is unramified at a unique real place $\nu$ of $k$. This gives us an identification $B\otimes_k k_\nu\cong \M_2(\mathbb R)$. Let $\mathcal O$ be a maximal order of $B$ and $\mathcal O^1$ be the multiplicative subgroup of $\mathcal O^*$ consisting of those elements with reduced norm one. We denote by $\Gamma_\mathcal O$ the image of $\mathcal O^1$ in $\PSL_2(\mathbb R)$. It was shown by Borel \cite{Borel} that $\Gamma_\mathcal O$ is a discrete subgroup of $\PSL_2(\mathbb R)$ whose coarea is given by the formula:

\begin{equation}\label{equation:volumeformula}
\coarea(\Gamma_{\calO})=\frac{8\pi d_k^{\frac{3}{2}}\zeta_k(2)}{(4\pi^2)^{n_k}}\prod_{\frakp\in\Ram_f(B)}\left(N(\frakp)-1\right).
\end{equation}

We define an {\it arithmetic Fuchsian group} to be a discrete subgroup of $\PSL_2(\mathbb R)$ which is commensurable with a group of the form $\Gamma_\mathcal O$. An arithmetic Fuchsian group is {\it derived from a quaternion algebra} if it is contained in a group of the form $\Gamma_\mathcal O$. Although not every arithmetic Fuchsian group $\Gamma$ is derived from a quaternion algebra, it is known that the subgroup $\Gamma^2$ of $\Gamma$ generated by squares of elements of $\Gamma$ is always derived from a quaternion algebra \cite[Chapter 8]{MR}. An {\it arithmetic hyperbolic surface} is a hyperbolic surface of the form ${\bf H}^2/\Gamma$ where $\Gamma$ is an arithmetic Fuchsian group. We will say that an arithmetic hyperbolic surface is derived from a quaternion algebra if its fundamental group $\Gamma$ is derived from a quaternion algebra.

Suppose that $\Gamma$ is an arithmetic Fuchsian group. The {\it trace field} of $\Gamma$ is the field $\mathbb Q(\tr\gamma : \gamma\in \Gamma)$. It follows from the Mostow Rigidity Theorem that this trace field is a number field. Although it turns out that the trace field of an arithmetic Fuchsian group is not an invariant of the commensurability class, it can be shown that the {\it invariant trace field} $\mathbb Q(\tr\gamma^2 : \gamma\in \Gamma)$ is a commensurability class invariant. We will denote the invariant trace field of $\Gamma$ by $k\Gamma$. 

We will now define a quaternion algebra over $k\Gamma$. Let \[B\Gamma = \left\{\sum b_i\gamma_i : b_i\in k\Gamma, \gamma_i\in \Gamma\right\}\] where only finitely many of the $b_i$ are non-zero. We may define multiplication in $B\Gamma$ in the obvious manner: $(b_1\gamma_1)\cdot (b_2\gamma_2)=(b_1b_2)(\gamma_1\gamma_2)$. The algebra $B\Gamma$ is a quaternion algebra over $k\Gamma$ which we call the {\it invariant quaternion algebra} of $\Gamma$.

Suppose that $\Gamma_1, \Gamma_2$ are arithmetic Fuchsian groups. It was shown by Maclachlan and Reid \cite[Chapter 8.4]{MR} that the surfaces ${\bf H}^2/\Gamma_1$ and ${\bf H}^2/\Gamma_2$ are commensurable in the wide sense if and only if $k\Gamma_1\cong k\Gamma_2$ and $B\Gamma_1\cong B\Gamma_2$.

Let $\Gamma$ be an arithmetic Fuchsian group and $\gamma\in\Gamma$ be a hyperbolic element. Let $\lambda=\lambda_\gamma$ be an eigenvalue of a preimage of $\gamma$ in $\SL_2(\R)$ for which $|\lambda|>1$. Then $\lambda$ is well-defined up to multiplication by $\pm 1$. The axis of $\gamma$ in $\bfH^2$ projects to a closed geodesic on $\bfH^2/\Gamma$ of length $\ell=\ell(\gamma)$ where $\cosh(\ell/2)=\pm\tr(\gamma)/2$.

%%%%%%%%%%%%%%%%%%%%%%
%%%%%%%%%%%%%%%%%%%%%%
%%%%%%%%%%%%%%%%%%%%%%
%%%%%%%%%%%%%%%%%%%%%%
%%%%%%%%%%%%%%%%%%%%%%
%%%%%%%%%%%%%%%%%%%%%%
%%%%%%%%%%%%%%%%%%%%%%
%%%%%%%%%%%%%%%%%%%%%%
%%%%%%%%%%%%%%%%%%%%%%
%%%%%%%%%%%%%%%%%%%%%%
%%%%%%%%%%%%%%%%%%%%%%
%%%%%%%%%%%%%%%%%%%%%%

\section{Quaternion algebras with specified maximal subfields}

In this section we prove a variety of results concerning quaternion algebras admitting embeddings of a fixed set of quadratic fields. These results will play an important role in the proofs of this paper's main theorems.

\begin{example}
Consider the three real quadratic fields $\Q(\sqrt{3}),\Q(\sqrt{17}), \Q(\sqrt{51})$. The only primes that do not split in any of these fields are $3$ and $17$. It follows that if $B$ is a quaternion division algebra over $\Q$ which admits embeddings of $\Q(\sqrt{3}),\Q(\sqrt{17})$ and $\Q(\sqrt{51})$ then $B$ is the unique quaternion division algebra over $\Q$ with $\Ram(B)=\{3,17\}$. The quaternion algebra $\M_2(\mathbb Q)$ also admits embeddings of these quadratic fields, hence there are (up to isomorphism) two quaternion algebras over $\mathbb Q$ which admit embeddings of $\Q(\sqrt{3}),\Q(\sqrt{17})$ and $\Q(\sqrt{51})$.
\end{example}

\begin{theorem}\label{theorem:csatheorem1}
If $L_1,\dots, L_r$ is a collection of quadratic extensions of a number field $k$ with the property that only finitely many quaternion algebras over $k$ admit embeddings of the $L_i$ then the number of isomorphism classes of quaternion algebras over $k$ which admit embeddings of all of the $L_i$ is $2^n$ for some $n\geq 0$.
\end{theorem}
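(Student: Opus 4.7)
The plan is to reduce the count of quaternion algebras admitting all the specified embeddings to the combinatorics of even-cardinality subsets of a certain finite set of places of $k$.

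First I would invoke the standard criterion for an embedding of a quadratic extension into a quaternion algebra: a quadratic extension $L/k$ embeds into a quaternion algebra $B$ over $k$ if and only if no place $\frakp$ in $\Ram(B)$ splits in $L$ (this is the local-global embedding result; see, e.g., Maclachlan--Reid Theorem 7.3.3 or Vignéras). For each $i$, define
\[
T_i = \{\frakp \text{ a place of } k : \frakp \text{ does not split in } L_i\},
\]
and set $T = \bigcap_{i=1}^{r} T_i$. The embedding criterion, applied simultaneously to all $L_i$, says that $B$ admits embeddings of every $L_i$ if and only if $\Ram(B) \subseteq T$.

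Next I would observe that the finiteness hypothesis in the theorem is precisely equivalent to $T$ being finite. Indeed, by the classification of quaternion algebras over a number field, isomorphism classes of quaternion algebras $B$ with $\Ram(B) \subseteq T$ correspond bijectively to even-cardinality subsets of $T$ (every finite set of places of even cardinality consisting of finite or real places is realized as $\Ram(B)$ for a unique $B$, as recalled in the excerpt). If $T$ were infinite it would contain infinitely many even subsets, contradicting the hypothesis; conversely if $T$ is finite of cardinality $N$, then there are only finitely many candidates.

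Finally I would count: the number of even-cardinality subsets of a finite set of size $N$ equals $2^{N-1}$ when $N \geq 1$, and equals $1$ when $N = 0$. In both cases this is a power of $2$, so taking $n = \max(0,\,N-1)$ yields $2^n$ as desired. There is no real obstacle here beyond citing the embedding criterion carefully (particularly taking proper account of the archimedean places, where ``non-split'' for a real place of $k$ in $L_i$ means that the place extends to a complex place of $L_i$). Once that criterion is in hand, the proof is a short combinatorial observation about subsets of even cardinality.
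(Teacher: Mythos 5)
Your proposal is correct and follows essentially the same route as the paper: both identify the finite set $T$ of places of $k$ that are non-split in every $L_i$, use the local-global embedding criterion to show that a quaternion algebra $B$ admits embeddings of all the $L_i$ precisely when $\Ram(B)\subseteq T$, and count even-cardinality subsets of $T$. If anything, you are slightly more careful than the paper about the edge case $|T|\in\{0,1\}$, where the formula $2^{|T|-1}$ must be interpreted as counting a single algebra (namely $\M_2(k)$).
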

\begin{proof}
Let $k$ be a number field and $L_1,\dots, L_r$ be a collection of quadratic extensions of $k$ such that there are only finitely many isomorphism classes of quaternion algebras over $k$ admitting embeddings of all of the $L_i$. We claim that all but finitely many primes of $k$ split in at least one of the $L_i$. Indeed, suppose to the contrary that $\mathfrak p_1, \mathfrak p_2,\dots$ are distinct primes of $k$ which do not split in any of the $L_i$. Because a quaternion algebra over $k$ admits an embedding of a quadratic extension $L/k$ if and only if no prime of $k$ which ramifies in the algebra splits in $L/k$, it follows that the (mutually non-isomorphic) quaternion algebras \[\{ B_i : \Ram(B_i)=\{\mathfrak p_i, \mathfrak p_{i+1}\}\}\]  each admit embeddings of all of the $L_i$, giving us a contradiction which proves our claim.

We have shown that all but finitely many primes (finite of infinite) of $k$ split in at least one of the $L_i$. Let $S=\{\mathfrak p_1,\dots, \mathfrak p_m\}$ be the primes of $k$ not splitting in any of the $L_i$. On the one hand there are precisely $2^{m-1}$ subsets of $S$ with an even number of elements, each of which corresponds to a unique quaternion algebra (the algebra which is ramified precisely at the primes in this subset). Of these algebras, $2^{m-1}-1$ are division algebras; the remaining algebra is $\M_2(k)$ and corresponds to the empty subset of $S$. On the other hand, if $B$ is a quaternion algebra over $k$ which admits embeddings of $L_1,\dots, L_r$ then the only primes which may ramify in $B$ are those lying in $S$. It follows that $\Ram(B)\subseteq S$. Because the set $\Ram(B)$ is non-empty and determines the isomorphism class of $B$, the theorem follows.
\end{proof}

The following corollary to Theorem \ref{theorem:csatheorem1} considers a similar counting problem, though with the caveat that the quaternion algebras being considered are required to have a prescribed archimedean ramification behavior which will be necessary in our geometric applications.

\begin{cor}\label{cor:csacor}Let $k$ be a number field of signature $(r_1,r_2)$ with $r_1>0$ and $L_1,\dots, L_r$ be a collection of quadratic extensions of $k$ such that only finitely many quaternion algebras over $k$ admit embeddings of the $L_i$. There is a nonnegative integer $n$ such that the number of quaternion algebras over $k$ which admit embeddings of all of the $L_i$ and are unramified at a unique real place of $k$, if nonzero, is equal to $m2^n$ for some integer $m\in\{1,r_1\}$.
\end{cor}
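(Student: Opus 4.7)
The plan is to piggyback on the proof of Theorem~\ref{theorem:csatheorem1} and overlay the archimedean constraint that exactly one real place of $k$ is unramified in $B$. From the proof of that theorem, the set $S$ of primes of $k$ (finite or real) which fail to split in every $L_i$ is finite, and the quaternion algebras over $k$ admitting embeddings of all the $L_i$ are precisely the $B$ with $\Ram(B)\subseteq S$ and $|\Ram(B)|$ even; complex places of $k$ always split in any quadratic extension and in any quaternion algebra, so they play no role. I would decompose $S = S_f \sqcup S_\infty$ into finite primes and real places, set $t = |S_f|$ and $s = |S_\infty|$, and organise the count by choosing $\Ram_\infty(B)$ first and $\Ram_f(B)$ second.

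The requirement that $B$ be unramified at a unique real place of $k$ is equivalent to $|\Ram_\infty(B)| = r_1 - 1$. Since $\Ram_\infty(B)\subseteq S_\infty$, each of the $r_1 - s$ real places outside $S_\infty$ is automatically unramified in every admissible $B$; hence the count is zero unless $s\in\{r_1-1,\, r_1\}$. If $s = r_1$ (Case~A), $\Ram_\infty(B)$ may be any $(r_1 - 1)$-subset of the $r_1$ real places, yielding $r_1$ choices --- equivalently, a choice of the unique unramified real place. If $s = r_1 - 1$ (Case~B), the single real place outside $S_\infty$ is forced to be the unramified one, so $\Ram_\infty(B) = S_\infty$ and there is a single choice.

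Once $\Ram_\infty(B)$ is fixed, the finite part $\Ram_f(B) \subseteq S_f$ must satisfy the parity condition $|\Ram_f(B)| \equiv r_1 - 1 \pmod{2}$ in order to make $|\Ram(B)|$ even. A standard count shows there are exactly $2^{t-1}$ such subsets when $t \geq 1$, and when $t = 0$ there is one (namely $\varnothing$) if $r_1$ is odd and none if $r_1$ is even. Multiplying through, the nonzero totals take the shape $r_1 \cdot 2^{t-1}$ or $2^{t-1}$ in the generic case, and $r_1$ or $1$ in the degenerate case $t = 0$ with $r_1$ odd; in every nonzero instance the answer can be written as $m\cdot 2^n$ with $m\in\{1, r_1\}$ and $n \geq 0$.

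I do not foresee a serious obstacle: the argument is combinatorial casework powered by Hasse--Minkowski together with the structural input of Theorem~\ref{theorem:csatheorem1}. The one thing that deserves care is confirming that the degenerate $t = 0$ case still fits the claimed form $m\cdot 2^n$, and that the impossible sub-cases (namely $s \le r_1 - 2$, or $t = 0$ with $r_1$ even) are precisely what the ``if nonzero'' qualifier in the statement accommodates.
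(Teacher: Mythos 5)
Your proposal is correct and follows essentially the same route as the paper: both identify the finite set $S$ of primes not splitting in any $L_i$, observe that the admissible algebras are exactly those with even $\Ram(B)\subseteq S$, and then split into the two cases $|S_\infty|=r_1$ (giving $r_1$ choices of the unramified real place) versus $|S_\infty|=r_1-1$ (forcing it), with a power of $2$ coming from the finite part. Your treatment of the parity count and the degenerate $t=0$ case is slightly more explicit than the paper's, but the argument is the same.
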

\begin{proof} 
We may assume that there exists at least one quaternion algebra $B$ over $k$ which admits embeddings of all of the $L_i$ and is split at a unique real place of $k$, as otherwise the total number of algebras we are counting is $0$. Suppose that the unique real place of $k$ at which $B$ is split is $\nu$. If $\omega\neq \nu$ is a real place of $k$ then $\omega$ ramifies in $B$, hence $\omega$ does not split in any of the extensions $L_i/k$ (since no place of $k$ which ramifies in a quaternion algebra over $k$ may split in a quadratic extension of $k$ which embeds into the quaternion algebra). We now have two cases to consider.

The first case is that $\nu$ does not split in any of the extensions $L_i/k$. In this case no real place of $k$ splits in any of the extensions $L_i/k$. Fix a real place $\nu'$ of $k$. We will count the number of quaternion algebras over $k$ which admit embeddings of all of the $L_i$ and are split at $\nu'$ and no other real places of $k$. The proof of Theorem \ref{theorem:csatheorem1} shows that all but finitely many primes (finite or infinite) of $k$ split in at least one of $L_1,\dots, L_r$. Let $S=\{\frakp_1,\dots,\frakp_m\}$ be the set of all primes of $k$ which do not split in any of these extensions. Note that we have already shown that in the case we are considering $S$ contains all real places of $k$. A quaternion algebra $B$ over $k$ is ramified at all real places of $k$ not equal to $\nu'$, split at $\nu'$ and admits embeddings of $L_1,\dots, L_r$ if and only if \[\Ram(B)= \{\omega : \omega \text{ is a real place of $k$ not equal to $\nu'$}\} \bigcup S'\] for some subset $S'$ of $S$ containing only finite primes and whose cardinality ensures that $\Ram(B)$ has an even number of elements. The number of such subsets is $2^n$ for some integer $n\geq 0$, hence there are a total of $r_1 2^n$ quaternion algebras over $k$ which are split at a unique real place of $k$ and which admit embeddings of all of the $L_i$ (since there are $r_1$ choices for $\nu'$).

Now consider the case in which $\nu$ splits in one of the extensions $L_i/k$. In this case a quaternion algebra over $k$ admits embeddings of all of the $L_i$ only if $\nu$ does not ramify in the quaternion algebra. Because we are counting quaternion algebras which are ramified at all but one real place of $k$, it must be the case that all of the quaternion algebras we are counting are split at $\nu$ and at no other real places of $k$. That there is a nonnegative integer $n$ such that there are $2^n$ quaternion algebras which are split at $\nu$ and no other real place of $k$ and which admit embeddings of all of the $L_i$ now follows from the argument that was used in the previous case.
\end{proof}

\begin{theorem}\label{theorem:csatheorem2}
Let $n\in\Z$ with $n\geq 0$. For every number field $k$ there exist quadratic extensions $L_1,\dots, L_r$ of $k$ such that there are precisely $2^n-1$ isomorphism classes of quaternion division algebras over $k$ which admit embeddings of all of the $L_i$.
\end{theorem}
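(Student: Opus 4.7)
The plan is to reduce to a combinatorial problem about a finite set of primes and then to engineer a biquadratic extension realizing that set. By the proof of Theorem \ref{theorem:csatheorem1}, if $L_1,\ldots,L_r$ are quadratic extensions of $k$ and $S$ denotes the finite set of primes of $k$ not splitting in any of the $L_i$, then the quaternion algebras over $k$ admitting embeddings of every $L_i$ correspond bijectively to the even-cardinality subsets of $S$, with the empty subset producing $\M_2(k)$. To obtain exactly $2^n-1$ division algebras it therefore suffices to exhibit $L_1,\ldots,L_r$ for which $|S|=n+1$, since the even-cardinality subsets of such an $S$ number $2^n$, precisely one of which (namely $\emptyset$) gives the split algebra.

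To achieve $|S|=n+1$ I would take $L=k(\sqrt{a},\sqrt{b})$ to be a biquadratic extension with Galois group $G=(\Z/2\Z)^2$ and let $L_1=k(\sqrt{a})$, $L_2=k(\sqrt{b})$, $L_3=k(\sqrt{ab})$ be its three quadratic subfields. A prime $\mathfrak{p}$ of $k$ splits in $L_i$ if and only if its decomposition group $G_\mathfrak{p}(L/k)$ lies inside the index-$2$ subgroup $\Gal(L/L_i)$, and the three index-$2$ subgroups of $G$ together contain every proper subgroup of $G$; hence $\mathfrak{p}\in S$ if and only if $G_\mathfrak{p}(L/k)=G$. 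Since the decomposition group at any unramified prime is cyclic while $(\Z/2\Z)^2$ is not, $S$ consists precisely of those ramified primes of $L$ at which the local algebra $L\otimes_k k_\mathfrak{p}$ is a field of degree four, equivalently the ramified primes at which the images of $a$ and $b$ in $k_\mathfrak{p}^*/(k_\mathfrak{p}^*)^2$ are linearly independent.

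The construction then proceeds by selecting $n+1$ distinct finite primes $\mathfrak{p}_0,\ldots,\mathfrak{p}_n$ of $k$ of odd residue characteristic and, via class field theory together with a Grunwald--Wang--type argument, producing $a,b\in k^*$ generating the desired biquadratic extension: one searches for $a,b$ inside the finite $\F_2$-vector space $k(T,2)\subseteq k^*/(k^*)^2$ of square classes unramified outside a suitable finite set $T\supseteq\{\mathfrak{p}_0,\ldots,\mathfrak{p}_n\}$ (enlarged to include the dyadic primes and real places), requiring that the restrictions of $a$ and $b$ to each $k_{\mathfrak{p}_i}^*/(k_{\mathfrak{p}_i}^*)^2$ be linearly independent. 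This linear independence at $\mathfrak{p}_i$ forces $G_{\mathfrak{p}_i}=G$ and so places every $\mathfrak{p}_i$ in $S$; simultaneously arranging that $L$ be unramified outside $\{\mathfrak{p}_0,\ldots,\mathfrak{p}_n\}$ keeps every other prime of $k$ out of $S$ by the cyclic-decomposition remark. The main obstacle is the control of dyadic primes, where $k_\mathfrak{q}^*/(k_\mathfrak{q}^*)^2$ has $\F_2$-dimension greater than two and one must ensure either that $L$ is unramified at $\mathfrak{q}$ or that the images of $a,b$ there are linearly dependent (so that some $L_i$ splits at $\mathfrak{q}$). This is routine outside the exceptional case of Grunwald--Wang and may be arranged by an appropriate choice of the $\mathfrak{p}_i$; for instance when $k=\Q$ selecting $\mathfrak{p}_i\equiv 1\pmod{4}$ ensures $\Q(\sqrt{a})$ and $\Q(\sqrt{b})$ are unramified at $2$, and an analogous choice of local behavior at $2$-adic primes works in general. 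Real archimedean places present no obstruction since $\R^*/(\R^*)^2$ has $\F_2$-dimension one, so the decomposition group at any real place of $L/k$ has order at most two and can never equal $G$.
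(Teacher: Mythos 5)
Your reduction is sound and is in fact the same mechanism the paper uses: the paper's $L_1,L_2,L_3$ are precisely the three quadratic subfields of the biquadratic field $\Q(\sqrt{p_1},\sqrt{p_2\cdots p_m})$, and your observation that $\frakp\in S$ if and only if the decomposition group at $\frakp$ is all of $(\Z/2\Z)^2$ (equivalently, $a$ and $b$ are independent in $k_\frakp^*/(k_\frakp^*)^2$, which forces $\frakp$ to ramify) is exactly the right way to see why $S$ is finite and why the count is $2^n-1$ once $|S|=n+1$.

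The gap is in the step you call routine: producing $a,b$ that are locally independent at exactly $n+1$ places. There is a parity obstruction here. Take $k=\Q$ and follow your own recipe: let $a,b$ be positive squarefree integers supported on primes $r_1,\dots,r_t$, all $\equiv 1\pmod 4$, with exponent vectors $e,f\in\F_2^t$. At $r_i$ the class of $a$ is $(e_i,\sum_{j\neq i}\sigma_{ij}e_j)$ with $\sigma_{ij}=\tfrac{1}{2}\bigl(1-\bigl(\tfrac{r_j}{r_i}\bigr)\bigr)$, and independence at $r_i$ means the $2\times 2$ determinant $e_i\sum_j\sigma_{ij}f_j+f_i\sum_j\sigma_{ij}e_j$ equals $1$. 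Quadratic reciprocity makes $\sigma$ symmetric, so summing over $i$ gives $2\,e^{T}\Sigma f=0$ in $\F_2$; since every other place contributes nothing ($a,b$ are unit classes at primes outside the support, lie in the one-dimensional subgroup $\{1,5\}$ of $\Q_2^*/(\Q_2^*)^2$, and are positive at $\infty$), the number of places of independence --- that is, $|S|$ --- is forced to be \emph{even}. Hence your construction cannot reach $|S|=n+1$ for $n$ even, no matter how the $\frakp_i$ or auxiliary primes are chosen within that recipe; this is a reflection of the product formula, not a Grunwald--Wang issue. Escaping it requires either nontrivial-but-dependent local classes at dyadic or archimedean places or at primes $\equiv 3\pmod 4$ (e.g.\ for $n=0$ one can take $a=p_0$ with $p_0\equiv 7\pmod 8$ and $b=-1$), or --- as the paper does --- a \emph{fourth} quadratic field $L_4$ outside the biquadratic family, chosen to delete one prescribed prime from $S$ and thereby adjust its cardinality by one. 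Some such additional device must be supplied before your argument covers all $n$.
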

\begin{proof}
We begin by considering the case in which $k=\Q$. Let $p_1$ be a prime satisfying $p_1\equiv 1 \pmod{8}$ and define $L_1=\Q(\sqrt{p_1})$. Let $p_2,\dots, p_{m}$ be primes which satisfy $p_i\equiv 1\pmod{8}$ and which are all inert in $L_1/\Q$. Define $L_2=\Q(\sqrt{p_1p_2\cdots p_{m}})$ and $L_3=\Q(\sqrt{p_2\cdots p_{m}})$.

Let $d_1,d_2, d_3$ denote the discriminants of $L_1,L_2,L_3$. A prime $p$ splits in the extension $L_i/\Q$ if and only if the Kronecker symbol $\left(\frac{d_i}{p}\right)=1$, is inert in the extension if and only if $\left(\frac{d_i}{p}\right)=-1$ and ramifies if and only if $\left(\frac{d_i}{p}\right)=0$. Moreover, as $\left(\frac{ab}{p}\right)=\left(\frac{a}{p}\right)\left(\frac{b}{p}\right)$ for all positive integers $a,b$, we have the identity \[ \left(\frac{d_1}{p}\right)\left(\frac{d_2}{p}\right)\left(\frac{d_3}{p}\right)=\left(\frac{d_1d_2d_3}{p}\right)=\left(\frac{\left(p_1p_2\cdots p_m\right)^2}{p}\right)=1.\] This shows that every prime $p$ not lying in the set $\{p_1,\dots, p_m\}$ must split in one of the extensions $L_i/\Q$. While a prime $p_i$ with $i>1$ is inert in $L_1/\Q$ and ramifies in $L_2/\Q$ and $L_3/\Q$, quadratic reciprocity implies that the prime $p_1$ will split in $L_3/\Q$ if and only if $m$ is odd. 

Let $L_4$ be a real quadratic field in which the prime $p_1$ splits and in which $p_2,\dots, p_m$ are all inert. It now follows from the previous paragraph that that every prime not in $\{p_2,\dots, p_m\}$ splits in at least one of the quadratic fields $\{L_1,\dots, L_4\}$. If $B$ is a quaternion division algebra over $\Q$ into which $L_1,\dots, L_4$ all embed then the set $\Ram(B)$ of primes at which $B$ is ramified is a nonempty set of even cardinality which satisfies $\Ram(B)\subseteq \{p_2,\dots, p_m\}$. Conversely, every nonempty subset of $\{p_2,\dots,p_m\}$ with even cardinality defines a unique quaternion division algebra over $\Q$ into which the quadratic fields $\{L_1,\dots, L_4\}$ all embed. As there are precisely $2^{m-2}-1$ such subsets, setting $m=n+2$ proves the theorem in the case that $k=\Q$.

We now consider the general case in which $k$ is an arbitrary number field. Let $L_1,\dots, L_4$ be quadratic fields as above, though with the additional restrictions that $L_i\cap k=\Q$ for $i=1,\dots 4$ and that all of the primes in the set $\{p_2,\dots,p_m\}$ split completely in $k/\Q$. Let $p\not\in\{p_2,\dots,p_m\}$ be a rational prime and $\mathfrak p$ be a prime of $k$ lying above $p$. Then for $i=1,\dots,4$ the prime $\mathfrak p$ splits in the quadratic extension $kL_i/k$, where $kL_i$ is the compositum of $k$ and $L_i$. Also, if $q \in \{p_2,\dots,p_m\}$ and $\mathfrak q$ is a prime of $k$ lying above $q$ then $\mathfrak q$ is inert in $kL_i/k$ for $i=1,\dots,4$. Both of these assertions follow from standard properties of the Artin symbol \cite[Chapter X]{Lang-ANT} and the fact that $\Gal(kL_i/k)$ is isomorphic to $\Gal(L_i/\Q)$ via restriction to $L_i$. It follows that all but finitely many primes of $k$ split in at least one of the extensions $kL_1, \dots, kL_4$ and that there are at least $m-1$ primes of $k$ which do not split in any of the $kL_i$. By considering a fifth quadratic extension of $k$ in which $m-1$ of these primes are inert and the remainder of the primes split, we obtain five quadratic extensions of $k$ with the property that all but $m-1$ primes (finite or infinite) of $k$ split in at least one of these extensions. The theorem now follows, as it did in the $k=\Q$ case, from the correspondence between quaternion division algebras over $k$ admitting embeddings of these five quadratic extensions and even order subsets of these $m-1$ primes.
\end{proof}

\begin{rmk}\label{totallyreal}
Because it will be important in the proof of Theorem \ref{theorem:existence}, we remark that in the case that $k=\Q$, the quadratic fields furnished by Theorem \ref{theorem:csatheorem2} may all be assumed to be totally real. This follows immediately from the proof of Theorem \ref{theorem:csatheorem2}.
\end{rmk}
%%%%%%%%%%%%%%%%%%%%%%
%%%%%%%%%%%%%%%%%%%%%%
%%%%%%%%%%%%%%%%%%%%%%
%%%%%%%%%%%%%%%%%%%%%%
%%%%%%%%%%%%%%%%%%%%%%
%%%%%%%%%%%%%%%%%%%%%%
%%%%%%%%%%%%%%%%%%%%%%
%%%%%%%%%%%%%%%%%%%%%%
%%%%%%%%%%%%%%%%%%%%%%
%%%%%%%%%%%%%%%%%%%%%%
%%%%%%%%%%%%%%%%%%%%%%
%%%%%%%%%%%%%%%%%%%%%%

\section{Selectivity in quaternion algebras}

Let $k$ be a number field, $B$ be a quaternion algebra over $k$ which admits embeddings of the quadratic extensions $L_1,\dots, L_r$ of $k$. For each $i=1,\dots, r$, fix a quadratic $\mathcal O_k$-order $\Omega_i\subset L_i$. We would like to determine which maximal orders of $B$ contain conjugates of {\it all} of the quadratic orders $\Omega_i$. In the case that $r=1$ this problem was solved by Chinburg and Friedman \cite[Theorem 3.3]{CF-S}. Because of our interest in arithmetic hyperbolic surfaces and their invariant quaternion algebras, we are primarily interested in the case that $k$ is totally real and $B$ is unramified at a unique real place of $k$.

\begin{thm}[Chinburg and Friedman]\label{thm:CF}
Let $B$ be a quaternion algebra over a number field $k$, $\Omega\subset B$ be a commutative $\mathcal O_k$-order and assume that $B$ is unramified at some real place of $k$. Then every maximal order of $B$ contains a conjugate (by $B^*$) of $\Omega$, except when the following three conditions hold:
\begin{enumerate}[(1)]
\item $\Omega$ is an integral domain and its quotient field $L\subset B$ is a quadratic extension of $k$.
\item The extension $L/k$ and the algebra $B$ are unramified at all finite places and ramify at exactly the same (possibly empty) set of real places of $k$.
\item All prime ideals of $k$ dividing the relative discriminant ideal $d_{\Omega/\mathcal O_k}$ of $\Omega$ are split in $L/k$.
\end{enumerate}
Suppose now that (1), (2) and (3) hold. Then $B$ has an even number of conjugacy classes of maximal orders and the maximal orders containing some conjugate of $\Omega$ make up exactly half of these conjugacy classes.
\end{thm}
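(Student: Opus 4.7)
The plan is to reduce the question of global embeddings $\Omega \hookrightarrow \calO'$ (for $\calO'$ a maximal order in $B$) to a local-global question and then measure the obstruction using class field theory for $L/k$, all inside the adelic framework for $B$.

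First I would handle the local step. For each prime $\frakp$ of $k$ (finite or infinite), I would show that there is always a local embedding $\Omega_\frakp \hookrightarrow \calO'_\frakp$ for every maximal $\mathcal{O}_{k,\frakp}$-order $\calO'_\frakp$ of $B_\frakp$. For $\frakp$ split in $B$, this is a standard computation inside $\M_2(k_\frakp)$ using Brandt/Eichler's theory: local maximal orders are all conjugate and every quadratic order embeds. For $\frakp$ ramified in $B$ one uses that the maximal order of $B_\frakp$ is unique and contains the integral closure of $\mathcal{O}_{k,\frakp}$ in every quadratic subfield. Thus the obstruction to realizing $\Omega$ inside a chosen maximal order is purely a global one.

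Next, using strong approximation for $\SL_1(B)$ (available because $B$ is split at some real place by hypothesis), I would identify the set of conjugacy classes of maximal orders with a finite quotient of the idele class group of $k$ — concretely, with a quotient $C_k/\nrd(\widehat{\calO}^*)C_k^+$ of the narrow class group, where $C_k^+$ reflects the real ramification of $B$. The set of conjugacy classes of maximal orders containing a conjugate of $\Omega$ is then an orbit under a subgroup $H$ coming from the image of the norm map from idelic elements of $L^*$ fixing $\Omega$. I would compute the index $[C_k/(\cdots):H]$ by identifying $H$ with $N_{L/k}(C_L)$ (times appropriate local factors at the discriminant of $\Omega$ and at the real places) and invoking the Artin reciprocity law. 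By class field theory, $C_k/N_{L/k}(C_L)$ has order $1$ or $2$, being nontrivial precisely when $L$ is contained in a certain ray class field of $k$.

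Finally, I would translate the condition ``$L$ is contained in that ray class field, and the associated quadratic character on the class set is nontrivial'' into the three explicit conditions (1)--(3) of the theorem: condition (1) says $L \subset B$ is a field (so that the norm character from $L$ is defined); condition (2) amounts to the statement that the conductor of $L/k$ is supported only at the real places ramified in $B$, so that $L$ sits in the genus field of $k$ associated with the signature pattern of $B$; and condition (3) ensures that the local contribution at primes dividing $d_{\Omega/\calO_k}$ does not destroy the quadratic character. When all three hold, the character is nontrivial, giving index exactly $2$; otherwise it is trivial and every conjugacy class of maximal orders receives $\Omega$. The main obstacle is the last computation: showing that under (1)--(3) the index is exactly $2$ (not merely at most $2$). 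This requires matching the local Artin symbols of primes in $\nrd(\widehat{\calO}^*)$ with the splitting behavior of those primes in $L/k$, and verifying, via Hasse's norm theorem together with the ramification hypotheses on $B$, that the resulting character on the class set does not collapse.
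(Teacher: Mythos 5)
The paper does not actually prove this statement---it is quoted from Chinburg--Friedman \cite{CF-S}, Theorem~3.3, and used as a black box. Your sketch is a correct outline of the standard proof of that result (local embeddability at every place, strong approximation to identify the type set of maximal orders with a quotient of a ray class group, and class field theory showing the relevant subgroup has index $1$ or $2$ with conditions (1)--(3) characterizing when $L$ lies in the associated spinor class field), which is essentially Chinburg and Friedman's own argument, phrased adelically rather than via their tree-and-distance-idele language.
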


\begin{rmk}We note that Chinburg and Friedman actually prove a stronger result which shows exactly which conjugacy classes of maximal orders have representatives admitting embeddings of $\Omega$.\end{rmk}

\begin{thm}\label{thm:selectivity}
Let $k$ be a totally real number field and $L_1,\dots, L_r$ be quadratic extensions of $k$. For each $i=1,\dots, r$ let $\Omega_i$ be a quadratic $\mathcal O_k$-order contained in $L_i$. Suppose that there exists a quaternion algebra over $k$ which is unramified at a unique real place of $k$ and into which all of the $L_i$ embed. Then with one possible exception, every quaternion algebra over $k$ which is unramified at a unique real place of $k$ and into which all of the $L_i$ embed has the property that every maximal order of the quaternion algebra contains conjugates of all of the $\Omega_i$. Furthermore, this exceptional quaternion algebra does not exist if the narrow class number of $k$ is equal to one.
\end{thm}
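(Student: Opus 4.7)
The plan is to reduce the theorem to a direct application of the Chinburg--Friedman selectivity theorem (Theorem \ref{thm:CF}) to each pair $(B,\Omega_i)$, and then to exploit the rigidity of the ramification conditions that selectivity imposes.

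I would begin by fixing a quaternion algebra $B$ over $k$ which is unramified at a unique real place $\nu_B$ and admits embeddings of $L_1,\dots,L_r$. Since $B$ is unramified at $\nu_B$, Theorem \ref{thm:CF} applies to each $(B,\Omega_i)$ and shows that every maximal order of $B$ contains a conjugate of $\Omega_i$ unless the three selectivity conditions hold for $(B,L_i,\Omega_i)$. If $B$ is exceptional, there is an index $i$ for which these conditions hold; condition (2) then forces $B$ to be unramified at every finite prime of $k$ and forces the set of real places ramifying in $L_i/k$ to equal the set of real places ramifying in $B$, namely all real places of $k$ other than $\nu_B$. Equivalently, every real place $\omega\neq\nu_B$ becomes complex in $L_i$, while $\nu_B$ splits in $L_i/k$.

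Next I would show that at most one exceptional algebra can occur. Suppose $B$ fails for $\Omega_i$ and a second algebra $B'$, split at some real place $\nu_{B'}$, fails for $\Omega_j$. Applying the previous observation to $(B',L_j)$ shows that $\nu_{B'}$ splits in $L_j/k$ and every other real place ramifies in $L_j/k$. But $L_j$ also embeds into $B$, so no real place where $B$ ramifies may split in $L_j/k$. Since $B$ ramifies at every real place distinct from $\nu_B$ and $\nu_{B'}$ splits in $L_j/k$, we must have $\nu_{B'}=\nu_B$. Because both $B$ and $B'$ are unramified at every finite prime and split at the same unique real place, their ramification sets coincide and hence $B\cong B'$.

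For the narrow class number one assertion, I would invoke class field theory: the narrow Hilbert class field of $k$ coincides with $k$ whenever $k$ has narrow class number one, so no nontrivial abelian extension of $k$ can be unramified at every finite prime of $k$. In particular no quadratic extension $L_i/k$ can be unramified at every finite prime, and so condition (2) of Theorem \ref{thm:CF} is never satisfied for any pair $(B,L_i)$. Hence no exceptional $B$ exists in this case.

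The only real obstacle is the archimedean bookkeeping in the second paragraph: one must carefully juggle the splitting and ramification behavior of $L_i$ and $L_j$ at each real place of $k$ against the ramification of two different candidate algebras $B$ and $B'$, all while remembering that both $L_i$ and $L_j$ must embed into each algebra. Once this is set up cleanly, the conclusion is essentially forced by Theorem \ref{thm:CF}.
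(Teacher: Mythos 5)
Your proposal is correct, and your argument for the ``at most one exception'' claim is essentially the one the paper uses: Theorem \ref{thm:CF} shows an exceptional algebra must be unramified at all finite places, condition (2) pins down its archimedean ramification via the failing $L_i$, and the requirement that every $L_j$ embed into every candidate algebra forces the unramified real places to coincide. Where you genuinely diverge is the narrow class number one case. The paper invokes the author's own results from \cite{L-S} concerning the field $k(\mathcal R)$ attached to a maximal order and its relation to the narrow Hilbert class field, whereas you argue directly from Theorem \ref{thm:CF}: since $k$ has narrow class number one, no quadratic extension $L_i/k$ can be unramified at every finite prime (it would lie in the narrow Hilbert class field, which is $k$ itself), so condition (2) is never satisfiable and selectivity never occurs. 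Your route is more self-contained and more elementary, as it gets the conclusion purely out of Chinburg--Friedman plus one line of class field theory, without importing the $k(\mathcal R)$/level-ideal machinery. Both are valid; the paper's version has the small advantage of being phrased so that it generalizes immediately to Eichler orders of nontrivial level, which the author needs elsewhere, but for maximal orders your argument is the cleaner one.
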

\begin{proof}
Suppose first that $k$ has narrow class number one and that $B$ is a quaternion algebra over $k$ which is unramified at a unique real place of $k$ and in which all of the $L_i$ embed. It was shown in \cite[Proposition 5.4]{L-S} that if $\mathcal R$ is an order of $B$ then there is an extension $k(\mathcal R)$ of $k$ with the property that if $L_i\not\subset k(\mathcal R)$ then every order in the genus of $\mathcal R$ admits an embedding of $\Omega_i$. By the Skolem-Noether theorem, this is equivalent to the statement that $\mathcal R$ contains a conjugate of $\Omega_i$. Moreover, it was shown in \cite[Section 3]{L-S} that the conductor of $k(\mathcal R)$ is divisible only by primes which divide the level ideal of $\mathcal R$. In the case we are considering, $\mathcal R$ is a maximal order. Therefore its level ideal is trivial and the genus of $\mathcal R$ is simply the set of all maximal orders of $B$. It follows that $k(\mathcal R)$ is contained in the narrow class field of $k$. As $k$ has narrow class number one, this means that $k(\mathcal R)=k$, hence \cite[Section 3]{L-S} shows that every maximal order of $B$ contains conjugates of all of the $\Omega_i$.

We now prove the first statement of the theorem. If $k=\mathbb Q$ then $k$ has narrow class number one and we are done by the previous paragraph. We may therefore assume that $k\neq \mathbb Q$. Note that because $k$ is totally real and not equal to $\mathbb Q$, it follows that $k$ has at least two real places. By hypothesis there exists a quaternion algebra $B$ over $k$ which is unramified at a unique real place of $k$ and into which all of the $L_i$ embed. Denote by $\nu$ the real place of $k$ which is unramified in $B$. If $\omega\neq \nu$ is another real place of $k$ then $\omega$ ramifies in $B$, hence ramifies in all of the extensions $L_i/k$, as otherwise the $L_i$ would not all embed into $B$.

Let $B'$ be a quaternion algebra over $k$ which admits embeddings of all of the $L_i$ and which is unramified at a unique real place of $k$. Suppose that $B'$ and one of the extensions, say $L_i$, satisfy condition (2) in Theorem \ref{thm:CF}. We have already seen that every real place $\omega$ of $k$ not equal to $\nu$ ramifies in $L_i$. Because $B'$ and $L_i$ satisfy (2), it must be that $B'$ ramifies at $\omega$ as well. Because $B'$ is not ramified at all real places of $k$ we may deduce that $\Ram_\infty(B')=\{\omega : \omega \text{ is a real place of $k$ not equal to $\nu$}\}$. Also, because $B'$ satisfies (2) we see that $\Ram_f(B')=\emptyset$. This shows that if $B'$ and $L_i$ satisfy condition (2) of Theorem \ref{thm:CF} then $\Ram(B')=\{\omega : \omega \text{ is a real place of $k$ not equal to $\nu$}\}$. Because a quaternion algebra is completely determined by the primes that ramify in the algebra, we conclude that there is at most one quaternion algebra over $k$ for which the conditions in Theorem \ref{thm:CF} are satisfied for any of the $\Omega_i$ and $L_i$. The theorem now follows from Theorem \ref{thm:CF}.
\end{proof}
%%%%%%%%%%%%%%%%%%%%%%
%%%%%%%%%%%%%%%%%%%%%%
%%%%%%%%%%%%%%%%%%%%%%
%%%%%%%%%%%%%%%%%%%%%%
%%%%%%%%%%%%%%%%%%%%%%
%%%%%%%%%%%%%%%%%%%%%%
%%%%%%%%%%%%%%%%%%%%%%
%%%%%%%%%%%%%%%%%%%%%%
%%%%%%%%%%%%%%%%%%%%%%
%%%%%%%%%%%%%%%%%%%%%%
%%%%%%%%%%%%%%%%%%%%%%
%%%%%%%%%%%%%%%%%%%%%%

\section{A useful lemma}

In this section we prove a lemma which will play an important role in the proofs of our main theorems.

\begin{lemma}\label{lemma:sameinvariants}
Let $\Gamma, \Gamma'$ be arithmetic Fuchsian groups for which the surfaces $\bfH^2/\Gamma, \bfH^2/\Gamma'$ have closed geodesics of length $\ell$. Let $\gamma\in\Gamma$ be the hyperbolic element associated to $\ell$ and $\lambda_\gamma$ the corresponding eigenvalue. Then the invariant trace fields of $\Gamma$ and $\Gamma'$ are equal, and the invariant quaternion algebras of $\Gamma$ and $\Gamma'$ both admit embeddings of the quadratic extension $\Q(\lambda_{\gamma^2})$ of this common invariant trace field.
\end{lemma}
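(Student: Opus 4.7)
The plan is to convert the shared closed geodesic of length $\ell$ into shared algebraic data about $\gamma^2$, and then use the archimedean structure of arithmetic Fuchsian groups to pin down the invariant trace fields.

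First I would extract the common data. Applying $\cosh(\ell/2)=|\tr(\gamma)|/2$ to both surfaces furnishes hyperbolic elements $\gamma\in\Gamma$ and $\gamma'\in\Gamma'$ with $|\tr(\gamma)|=|\tr(\gamma')|=2\cosh(\ell/2)$, and squaring removes the sign ambiguity to give
\[
\tr(\gamma^2)=4\cosh^2(\ell/2)-2=2\cosh(\ell)=\tr(\gamma'^2),\qquad \lambda_{\gamma^2}=e^\ell=\lambda_{\gamma'^2}.
\]
These are the shared algebraic invariants I will work with.

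The main step---and the main obstacle---is to prove $k\Gamma=\Q(\tr(\gamma^2))$. One inclusion is easy: because $\Gamma^2$ is derived from $B\Gamma$ (Section \ref{section:arithmetic}), $\gamma^2$ lies in a maximal order of $B\Gamma$, so $\tr(\gamma^2)\in k\Gamma$ and hence $\Q(\tr(\gamma^2))\subseteq k\Gamma$. For the reverse inclusion I would exploit that $B\Gamma$ is unramified at exactly one real place $\nu$ of the totally real field $k\Gamma$: at $\nu$ the element $\gamma^2$ realises as a hyperbolic matrix in $\SL_2(\R)$, so $\nu(\tr(\gamma^2))=2\cosh(\ell)>2$; at every other real place $\omega$ the algebra $B\Gamma$ ramifies and $\gamma^2$ becomes a norm-one Hamilton quaternion, forcing $|\omega(\tr(\gamma^2))|\leq 2$. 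Since every real embedding of the subfield $\Q(\tr(\gamma^2))$ extends to $[k\Gamma:\Q(\tr(\gamma^2))]$ real embeddings of $k\Gamma$, all assigning $\tr(\gamma^2)$ the same real value, the constraint that exactly one embedding of $k\Gamma$ produces a value above $2$ forces $[k\Gamma:\Q(\tr(\gamma^2))]=1$. Thus $k\Gamma=\Q(\tr(\gamma^2))$, and by symmetry $k\Gamma'=\Q(\tr(\gamma'^2))=\Q(\tr(\gamma^2))$, giving $k\Gamma=k\Gamma'$.

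Finally, the embedding claim follows cleanly. The element $\gamma^2\in B\Gamma$ has reduced characteristic polynomial $X^2-\tr(\gamma^2)X+1$, which is irreducible over $k\Gamma$ since $\gamma^2\notin\{\pm 1\}$; hence the $k\Gamma$-subalgebra $k\Gamma[\gamma^2]$ of $B\Gamma$ is the quadratic extension $k\Gamma(\lambda_{\gamma^2})$. Under the identification $k\Gamma=\Q(\tr(\gamma^2))$ established above, this quadratic extension is precisely $\Q(\lambda_{\gamma^2})$, yielding the desired embedding $\Q(\lambda_{\gamma^2})\hookrightarrow B\Gamma$. Applying the identical reasoning to $\gamma'^2\in B\Gamma'$ produces the corresponding embedding $\Q(\lambda_{\gamma^2})\hookrightarrow B\Gamma'$.
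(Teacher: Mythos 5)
Your proof is correct and follows the same skeleton as the paper's: reduce the shared geodesic to the shared trace $\tr(\gamma^2)=2\cosh(\ell)=\tr(\gamma'^2)$, identify the invariant trace field with $\Q(\tr(\gamma^2))$, and observe that $k\Gamma[\gamma^2]\cong\Q(\lambda_{\gamma^2})$ sits inside the invariant quaternion algebra. The only difference is that where the paper cites Lemma 2.3 of \cite{CHLR} for the equality $k\Gamma=\Q(\tr(\gamma^2))$, you reprove it via the standard archimedean argument (the trace has absolute value at most $2$ at every ramified real place and exceeds $2$ only at the split place), which is precisely the content of that citation.
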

\begin{proof}
Because $\gamma^2$ is contained in the subgroup $\Gamma^2$ of $\Gamma$ generated by squares, which is derived from a quaternion algebra \cite[Chapter 8]{MR}, it follows from Lemma 2.3 of \cite{CHLR} that the invariant trace field of $\Gamma^2$, and hence of $\Gamma$, is $\Q(\lambda_{\gamma^2}+1/\lambda_{\gamma^2})=\Q(\tr(\gamma^2))$. Because $\bfH^2/\Gamma'$ also contains a geodesic of length $\ell$, the geodesic length formula shows that $\Gamma'$ contains an  element $\gamma'$ such that $\tr(\gamma')=\tr(\gamma')$ (up to a sign). In particular this implies that \[\tr(\gamma'^2)=\tr^2(\gamma')-2=\tr^2(\gamma)-2=\tr(\gamma^2),\] from which we conclude that $\Q(\tr(\gamma^2))=\Q(\tr(\gamma'^2))$. Because $\Q(\tr(\gamma^2))$ is the invariant trace field of $\Gamma$ and $\Q(\tr(\gamma'^2))$ is the invariant trace field of $\Gamma'$, this proves the first part of the theorem.

Let $k$ denote the invariant trace field of $\Gamma$ and $\Gamma'$. Let $B\Gamma$ denote the invariant quaternion algebra of $\Gamma$ and $B\Gamma'$ the invariant quaternion algebra of $\Gamma'$. The fields $k(\lambda_{\gamma^2})$ and $k(\lambda_{\gamma'^2})$ embed into $B\Gamma$ and $B\Gamma'$ by \cite[Chapter 8]{MR}, hence the theorem follows from the fact that $k(\lambda_{\gamma^2})\cong \Q(\lambda_{\gamma^2})\cong k(\lambda_{\gamma'^2})$.
\end{proof}

The proof of Lemma \ref{lemma:sameinvariants} also shows the following.

\begin{lemma}\label{lemma:embedlemma}
Let $\Gamma, \Gamma'$ be a arithmetic Fuchsian groups derived from quaternion algebras for which the surfaces $\bfH^2/\Gamma, \bfH^2/\Gamma'$ have closed geodesics of length $\ell$. Let $\gamma\in\Gamma$ be the hyperbolic element associated to $\ell$ and $\lambda_\gamma$ the corresponding eigenvalue. Let $k$ denote the invariant trace fields of $\Gamma$ and $\Gamma'$. Then the invariant quaternion algebras of $\Gamma$ and $\Gamma'$ both admit embeddings of the quadratic extension $k(\lambda_\gamma)$ of $k$.
\end{lemma}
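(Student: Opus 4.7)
The plan is to adapt the proof of Lemma \ref{lemma:sameinvariants} in a straightforward way, exploiting the stronger hypothesis that $\Gamma$ and $\Gamma'$ are derived from quaternion algebras. In the previous lemma we had to replace $\gamma$ by $\gamma^2$ in order to guarantee that the resulting element lay in a subgroup derived from a quaternion algebra (namely $\Gamma^2$); now that $\Gamma$ itself is derived from a quaternion algebra, this passage is unnecessary, and the same argument goes through with $\gamma$ in place of $\gamma^2$.

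Concretely, first I would lift $\gamma \in \Gamma$ to an element $\tilde\gamma \in \calO^1$ for some maximal order $\calO$ of $B\Gamma$, using $\Gamma \subseteq \Gamma_\calO$. Since $\tilde\gamma$ has reduced trace in $\calO_k$ and reduced norm one, its minimal polynomial over $k$ is $x^2 - \tr(\gamma) x + 1$; this is also the minimal polynomial of $\lambda_\gamma$. The element $\gamma$ is hyperbolic, so $|\tr(\gamma)| > 2$ at the unique real place $\nu$ of $k$ where $B\Gamma$ splits, and in particular this polynomial is irreducible over $k$. Hence the $k$-subalgebra $k[\tilde\gamma] \subset B\Gamma$ is isomorphic to $k(\lambda_\gamma)$, and the inclusion $k[\tilde\gamma] \hookrightarrow B\Gamma$ furnishes the desired embedding $k(\lambda_\gamma) \hookrightarrow B\Gamma$.

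For the second assertion, the geodesic of length $\ell$ on $\bfH^2/\Gamma'$ is produced by a hyperbolic $\gamma' \in \Gamma'$, and the geodesic length formula forces $\tr(\gamma') = \pm\tr(\gamma)$. Since $\Gamma'$ is also derived from a quaternion algebra, its trace field coincides with its invariant trace field $k$, so $\tr(\gamma') \in k$. Then $\lambda_{\gamma'}$ is a root of $x^2 \mp \tr(\gamma) x + 1$, and regardless of the sign we have $k(\lambda_{\gamma'}) = k(\lambda_\gamma)$ because $\lambda \mapsto -\lambda$ swaps the roots of $x^2 - tx + 1$ and $x^2 + tx + 1$. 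Applying the argument of the previous paragraph with $\Gamma'$ and $\gamma'$ in place of $\Gamma$ and $\gamma$ yields an embedding $k(\lambda_\gamma) = k(\lambda_{\gamma'}) \hookrightarrow B\Gamma'$.

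There is no real obstacle here; the only subtlety is bookkeeping of the $\pm 1$ ambiguity in the definition of $\lambda_\gamma$ and the $\pm$ sign in $\tr(\gamma') = \pm\tr(\gamma)$, and the observation that neither ambiguity affects the quadratic field $k(\lambda_\gamma)$ that is ultimately embedded.
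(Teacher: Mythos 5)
Your proposal is correct and matches the paper's treatment exactly: the paper gives no separate argument, saying only that the proof of Lemma \ref{lemma:sameinvariants} also shows this lemma, and your write-up is precisely that proof with $\gamma$ in place of $\gamma^2$, which the derived-from-a-quaternion-algebra hypothesis permits. The one step stated too quickly is the irreducibility of $x^2-\tr(\gamma)x+1$ over $k$: this does not follow merely from $|\tr(\gamma)|>2$ at $\nu$ (a rational $t$ with $|t|>2$ can have $x^2-tx+1$ reducible), but rather because either $B\Gamma$ is a division algebra, so $k[\tilde\gamma]$ has no zero divisors and is a quadratic field, or else $k=\Q$, $B\Gamma\cong \M_2(\Q)$, and $\tr(\gamma)^2-4$ is a non-square integer.
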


%%%%%%%%%%%%%%%%%%%%%%
%%%%%%%%%%%%%%%%%%%%%%
%%%%%%%%%%%%%%%%%%%%%%
%%%%%%%%%%%%%%%%%%%%%%
%%%%%%%%%%%%%%%%%%%%%%
%%%%%%%%%%%%%%%%%%%%%%
%%%%%%%%%%%%%%%%%%%%%%
%%%%%%%%%%%%%%%%%%%%%%
%%%%%%%%%%%%%%%%%%%%%%
%%%%%%%%%%%%%%%%%%%%%%
%%%%%%%%%%%%%%%%%%%%%%
%%%%%%%%%%%%%%%%%%%%%%
%%%%%%%%%%%%%%%%%%%%%%

\section{Proof of Theorem \ref{theorem:shortintervals}}

Let $S=\{\ell_1,\dots,\ell_r\}$ be a set of nonnegative real numbers for which $\pi(V,S)\to\infty$ as $V\to\infty$. Let ${\bf H}^2/\Gamma_0$ be an arithmetic hyperbolic surface derived from a quaternion algebra whose length spectrum contains $S$. Let $k$ be the invariant quaternion algebra of $\Gamma_0$ and $B_0$ be the invariant trace field of $\Gamma_0$. For $i=1,\dots,r$ define $L_i=k(\lambda_i)$. Since $\pi(V,S)\to\infty$ as $V\to\infty$ there are infinitely many pairwise non-commensurable arithmetic hyperbolic surfaces derived from quaternion algebras with geodesics of lengths $\{\ell_1,\dots,\ell_r\}$. By Lemma \ref{lemma:embedlemma} the invariant quaternion algebras of these surfaces, which are pairwise non-isomorphic, all admit embeddings of $L_1,\dots,L_r$. This shows, in particular, that there are infinitely many primes of $k$ which are inert in all of the extensions $L_i/k$.

Suppose that $B$ is a quaternion algebra over $k$ which is unramified at a unique real place of $k$, admits embeddings of $L_1,\dots, L_r$ and satisfies $\Ram_f(B)\neq \emptyset$. For each $i=1,\dots, r$, fix a quadratic $\mathcal O_k$-order $\Omega_i\subset L_i$ which contains a preimage in  $L_i$ of $\gamma_i$. It follows from Theorem \ref{thm:selectivity} that every maximal order of $B$ contains conjugates of all of the $\Omega_i$. If $\mathcal O$ is one such maximal order then the arithmetic hyperbolic surface ${\bf H}^2/\Gamma_{\mathcal O}$, which is by definition derived from a quaternion algebra, must have length spectrum containing $S$. Let $V_0$ denote the area of ${\bf H}^2/\Gamma_{\mathcal O}$.

Let $\epsilon>0$ and define $\theta=\frac{8}{3}$ if $r=1$ and $\theta=\frac{1}{2^r}$ if $r>1$. Finally, let $V^{1-\theta+\epsilon} < W < V$. In light of the previous paragraph it suffices to show that for all sufficiently large $V$ one can construct at least $\frac{1}{2^r}\cdot \frac{W}{\log V}$ quaternion algebras $B$ which are ramified at a finite prime of $k$, a unique real place of $k$, admit embeddings of all of the $L_i$ and satisfy $\coarea(\Gamma_\mathcal O)\in (V,V+W)$ where $\mathcal O$ is a maximal order of $B$. Let $\frakp_0$ be a prime of $k$ which is inert in all of the extensions $L_i/k$ (for $i=1,\dots, r$), is unramified in $B_0$ and which satisfies $N(\frakp_0)>13$. Note that such a prime exists because we have already shown that there are infinitely many primes of $k$ which are inert in all of the extensions $L_i/k$. Before continuing we note that because the compact (respectively non-compact) hyperbolic $2$-orbifold of minimal area has area $\pi/42$ (respectively, $\pi/6$), the fact that $N(\frakp_0)>13$ ensures that $V_0\cdot (N(\frakp_0)-1)>1$ (see \cite{K}). We will now construct our quaternion algebras $B$ by choosing primes $\frakp$ of $k$ which are unramified in $B_0$ and inert in all of the extensions $L_i/k$, and then defining $B$ to be the quaternion algebra for which $\Ram(B)=\Ram(B_0)\cup \{\frakp_0,\frakp\}$. As all of the $L_i$ embed into $B_0$ it must be the case that no prime of $\Ram(B_0)$ splits in any of the extensions $L_i/k$. Further, because of the way that we chose $\frakp_0$ and $\frakp$, neither of these primes split in any of the extensions $L_i/k$, hence $B$ admits embeddings of the $L_i$ as desired. If $\mathcal O$ is a maximal order of $B$ then the coarea of $\Gamma_\mathcal O$ is given by \[V_0(N(\frakp_0)-1)(N(\frakp)-1)\] by (\ref{equation:volumeformula}). 

Let $L$ denote the compositum over $k$ of $L_1, L_2,\dots, L_r$. We will show that $[L:k]=2^r$. Suppose to the contrary that $[L:k]=2^s<2^r$. Relabelling the $L_i$ as necessary, we may assume that the compositum over $k$ of $L_1,\dots, L_s$ is $L$. Because $L_r$ is contained in $L$ and $\Gal(L/k)\cong (\mathbb Z/2\mathbb Z)^s$, the Galois correspondence implies that there exist $1\leq i<j\leq s$ such that $L_r$ is contained in the compositum of $L_i$ and $L_j$. Let $\frakq$ be a prime of $k$ which is unramified in $L_i, L_j$ and $L_r$. We claim that $\frakq$ splits in one of these three quadratic extensions of $k$. Indeed, were $\frakq$ inert in all three extensions then the Galois group $\Gal(L_iL_j/k)$ of the compositum of $L_i$ and $L_j$ would have to be cyclic of prime power order \cite[p. 115]{M}, which is not the case since $\Gal(L_iL_j/k)\cong (\mathbb Z/2\mathbb Z)^2$. This shows that there are only finitely many primes of $k$ which do not split in any of $L_i, L_j, L_r$. The proof of Theorem \ref{theorem:csatheorem1} now implies that there are only finitely many quaternion algebras over $k$ which admit embeddings of $L_i, L_j$ and $L_r$, and hence of $L_1,\dots, L_r$. This is a contradiction as we have already seen that there are infinitely many such quaternion algebras. Therefore $[L:k]=2^r$.

We will now employ a version of the Chebotarev density theorem in short intervals due to Balog and Ono \cite{BO}. This theorem shows that the number of primes $\mathfrak P$ of $k$ which are unramified in $L/k$, have $(\mathfrak P,L/k)=(1,\dots,1)\in\Gal(L/k)$ and have $X\leq N(\mathfrak P)\leq X+Y$ is asymptotically \[\frac{1}{2^s}\cdot \frac{Y}{\log X}\] for all sufficiently large $X$ if $\epsilon'>0$ and $X^{1-\theta+\epsilon'}\leq Y\leq X$. Theorem \ref{theorem:shortintervals} now follows from the short intervals version of the Chebotarev density theorem upon setting $c=V_0\cdot (N(\frakp_0)-1)$ and $X=\frac{1}{c}V$.

%%%%%%%%%%%%%%%%%%%%%%
%%%%%%%%%%%%%%%%%%%%%%
%%%%%%%%%%%%%%%%%%%%%%
%%%%%%%%%%%%%%%%%%%%%%
%%%%%%%%%%%%%%%%%%%%%%
%%%%%%%%%%%%%%%%%%%%%%
%%%%%%%%%%%%%%%%%%%%%%
%%%%%%%%%%%%%%%%%%%%%%
%%%%%%%%%%%%%%%%%%%%%%
%%%%%%%%%%%%%%%%%%%%%%
%%%%%%%%%%%%%%%%%%%%%%
%%%%%%%%%%%%%%%%%%%%%%

\section{Proof of Theorem \ref{theorem:finiteness}}

Let $S=\{\ell_1,\dots,\ell_r\}$ be a finite set of nonnegative numbers for which $\pi(V,S)$ is eventually constant and greater than zero. Let $\bfH^2/\Gamma$ be an arithmetic hyperbolic surface derived from a quaternion algebra whose length spectrum contains $S$. Let $k=k\Gamma$ be the invariant trace field of $\Gamma$ and $B=B\Gamma$ be the invariant quaternion algebra of $\Gamma$. For $i=1,\dots, r$, let $\gamma_i$ be the associated hyperbolic element and $\lambda_{\gamma_i}$ be the eigenvalue of the preimage in $\SL_2(\R)$ of $\gamma_i$ for which $|\lambda_{\gamma_i}|>1$.

Suppose that $\bfH^2/\Gamma'$ is an arithmetic hyperbolic surface derived from a quaternion algebra whose length spectrum contains $S$ and which is not commensurable with $\bfH^2/\Gamma$. By Lemma \ref{lemma:sameinvariants}, the invariant trace field of $\bfH^2/\Gamma'$ is also $k$ and the invariant quaternion algebra $B'$ of $\bfH^2/\Gamma'$ admits embeddings of the quadratic extensions $k(\lambda_{\gamma_1}), \dots, k(\lambda_{\gamma_r})$ of $k$. 

Conversely, suppose that $B''$ is a quaternion algebra over $k$ which is unramified at a unique real place of $k$, admits embeddings of $k(\lambda_{\gamma_1}), \dots, k(\lambda_{\gamma_r})$ and is not isomorphic to $B$. For each $i=1,\dots, r$, fix a quadratic $\mathcal O_k$-order $\Omega_i\subset k(\lambda_i)$ which contains a preimage in  $k(\lambda_i)$ of $\gamma_i$. It follows from Theorem \ref{thm:selectivity} that with one possible exception (which can occur only if the narrow class number of $k$ is greater than one), every maximal order of $B''$ contains conjugates of all of the $\Omega_i$ and hence gives rise to an arithmetic hyperbolic surface ${\bf H}^2/\Gamma_{\mathcal O}$ containing closed geodesics of lengths $\ell_1,\dots,\ell_r$. Moreover, such a surface is, by definition, derived from a quaternion algebra.

From the above we deduce that with one possible exception, every isomorphism class of quaternion algebras over $k$ which split at a unique real place of $k$ and admit embeddings of all of the fields $k(\lambda_{\gamma_i})$ will give rise to an arithmetic hyperbolic surface derived from a quaternion algebra with length spectrum containing $S$. Moreover, because these algebras are pairwise non-isomorphic, the associated hyperbolic surfaces are pairwise non-commensurable. Theorem \ref{theorem:finiteness} now follows from Corollary \ref{cor:csacor}.

%%%%%%%%%%%%%%%%%%%%%%
%%%%%%%%%%%%%%%%%%%%%%
%%%%%%%%%%%%%%%%%%%%%%
%%%%%%%%%%%%%%%%%%%%%%
%%%%%%%%%%%%%%%%%%%%%%
%%%%%%%%%%%%%%%%%%%%%%
%%%%%%%%%%%%%%%%%%%%%%
%%%%%%%%%%%%%%%%%%%%%%
%%%%%%%%%%%%%%%%%%%%%%
%%%%%%%%%%%%%%%%%%%%%%
%%%%%%%%%%%%%%%%%%%%%%
%%%%%%%%%%%%%%%%%%%%%%

\section{Proof of Theorem \ref{theorem:existence}}

Fix an integer $n\geq 0$. By Theorem \ref{theorem:csatheorem2} there exist quadratic extensions $L_1,\dots, L_r$ of $\mathbb Q$ such that there are precisely $2^n-1$ quaternion division algebras over $\mathbb Q$ which admit embeddings of all of the $L_i$. Moreover, as was explained in Remark \ref{totallyreal}, we may take these quadratic fields to all be real quadratic fields. The results of \cite[Chapter 12.2]{MR} (see for instance \cite[Theorem 12.2.6]{MR}, which also holds in the context of arithmetic hyperbolic surfaces) show that these real quadratic fields give rise to hyperbolic elements $\gamma_1,\dots,\gamma_r$ of $\PSL_2(\mathbb R)$ and that each of the $2^n-1$ quaternion division algebras gives rise to an arithmetic hyperbolic surface derived from a quaternion algebra containing closed geodesics of lengths $\ell(\gamma_i),\dots,\ell(\gamma_r)$. Here we have used the fact that by Theorem \ref{thm:CF}, every maximal order of these quaternion algebras contains a conjugate of each of the $\gamma_i$. Similarly, the quaternion algebra $\M_2(\mathbb Q)$ admits embeddings of all of these real quadratic fields and gives rise to the hyperbolic surface ${\bf H}^2/\PSL_2(\mathbb Z)$ (whose length spectrum must also contain $\ell(\gamma_i),\dots,\ell(\gamma_r)$). Let $S=\{\ell(\gamma_1),\dots,\ell(\gamma_r)\}$. We have just shown that for sufficiently large $V$ we have that $\pi(V,S)\geq 2^n$. Suppose now that ${\bf H}^2/\Gamma$ is an arithmetic hyperbolic surface derived from a quaternion algebra whose length spectrum contains $S$. Lemma \ref{lemma:sameinvariants} shows that the invariant trace field of this surface is $\mathbb Q$ and that its invariant quaternion algebra admits embeddings of the real quadratic fields $L_1,\dots, L_r$. Recall that two arithmetic hyperbolic surfaces are commensurable if and only if they have isomorphic invariant trace fields and invariant quaternion algebras \cite[Chapter 8.4]{MR}. If ${\bf H}^2/\Gamma$ is not compact then it is commensurable with ${\bf H}^2/\PSL_2(\mathbb Z)$, while if ${\bf H}^2/\Gamma$ is compact its invariant quaternion algebra must be one of our $2^n-1$ quaternion division algebras by Theorem \ref{theorem:csatheorem2}. This shows that ${\bf H}^2/\Gamma$ is commensurable to one of the $2^n$ hyperbolic surfaces constructed above. Theorem \ref{theorem:existence} follows.
%%%%%%%%%%%%%%%%%%%%%%
%%%%%%%%%%%%%%%%%%%%%%
%%%%%%%%%%%%%%%%%%%%%%
%%%%%%%%%%%%%%%%%%%%%%
%%%%%%%%%%%%%%%%%%%%%%
%%%%%%%%%%%%%%%%%%%%%%
%%%%%%%%%%%%%%%%%%%%%%
%%%%%%%%%%%%%%%%%%%%%%
%%%%%%%%%%%%%%%%%%%%%%
%%%%%%%%%%%%%%%%%%%%%%
%%%%%%%%%%%%%%%%%%%%%%
%%%%%%%%%%%%%%%%%%%%%%

%%%%%%%%%%%%%%%%%%%%%%
%%%%%%%%%%%%%%%%%%%%%%
%%%%%%%%%%%%%%%%%%%%%%
%%%%%%%%%%%%%%%%%%%%%%
%%%%%%%%%%%%%%%%%%%%%%
%%%%%%%%%%%%%%%%%%%%%%
%%%%%%%%%%%%%%%%%%%%%%
%%%%%%%%%%%%%%%%%%%%%%
%%%%%%%%%%%%%%%%%%%%%%
%%%%%%%%%%%%%%%%%%%%%%
%%%%%%%%%%%%%%%%%%%%%%
%%%%%%%%%%%%%%%%%%%%%%

\end{document}